\documentclass[11pt,a4paper]{amsart}
\usepackage[utf8]{inputenc}
\usepackage[english]{babel}
\usepackage{setspace}
\usepackage[T1]{fontenc}
\usepackage{amsmath}
\usepackage{amsfonts}
\usepackage{amssymb}
\usepackage{amsthm}
\usepackage{graphicx}
\usepackage{subfig}
\usepackage{float}
\usepackage{commath}
\usepackage{svg}
\usepackage{tikz-cd}
\usepackage{xcolor}
\usepackage{physics}
\usepackage{mathdots}
\usepackage{yhmath}
\usepackage{cancel}
\usepackage{siunitx}
\usepackage{array}
\usepackage{faktor}
\usepackage{cancel}
\usepackage{array}
\usepackage{multirow}
\usepackage[normalem]{ulem}

\usetikzlibrary{fadings}
\usetikzlibrary{patterns}
\usetikzlibrary{shadows.blur}
\usetikzlibrary{shapes}

\newcommand{\N}{\mathbb{N}}
\newcommand{\C}{\mathbb{C}}


\newenvironment{customthm}[1]
  {\innercustomthm}
  {\endinnercustomthm}



\newtheorem*{theorem*}{Theorem}
\newtheorem{theorem}{Theorem}[section]
\newtheorem{problem}{Problem}
\newtheorem{conjecture}{Conjecture}

\newtheorem{lemma}{Lemma}[section]
\newtheorem{prop}{Proposition}[section]
\theoremstyle{definition}
\newtheorem{definition}{Definition}[section]
\newtheorem{example}{Example}

\begin{document}

\title[Lipschitz geometry of finite mappings]
{Lipschitz geometry of the image of finite mappings}

\author[J.J. Nu\~no-Ballesteros]{Juan Jos\'e Nu\~no Ballesteros}
\author[V. de O. Prado]{Vin\'icius de Oliveira Prado}
\author[G. P. Sanchis]{Guillermo Pe\~nafort Sanchis}
\author[J.E. Sampaio]{Jos\'e Edson Sampaio}

\address{Jos\'e Edson Sampaio: Departamento de Matem\'atica, Universidade Federal do Cear\'a,
	      Rua Campus do Pici, s/n, Bloco 914, Pici, 60440-900, 
	      Fortaleza-CE, Brazil. E-mail: {\tt edsonsampaio@mat.ufc.br} 	   
}

\thanks{
The first named author was partially supported by the doctorate fellowship CAPES project number 88882.349957.\\
The second and third named authors were partially supported by Grant PID2021-124577NB-I00 funded by MCIN/AEI/10.13039/501100011033 and by ``ERDF A way of making Europe".\\
The last named author was partially supported by CNPq-Brazil grant 310438/2021-7 and by the Serrapilheira Institute (grant number Serra -- R-2110-39576).
}

\keywords{Image of finite mappings, smoothness, LNE sets}
\subjclass[2010]{14B05; 32S50}

\begin{abstract}
    This paper is devoted to the study of the LNE property in complex analytic hypersurface parametrized germs, that is, the sets that are images of finite analytic map germs from $(\C^n,0)$ to $(\C^{n+1},0)$. We prove that if the multiplicity of $f$ is equal to his generic degree, then the image of $f$ is LNE at 0 if and only if it is a smooth germ. We also show that every finite corank 1 map is sattisfies the previous hypothesis. Moreover, we show that for an injective map germ $f$ from $(\C^n,0)$ to $(\C^{n+1},0)$, the image of $f$ is LNE at 0 if and only if $f$ is an embedding.
\end{abstract}
\maketitle

\section{introduction}
Given an analytic set $X\subset \C^n$, we have two natural distances on it: (1) the outer distance, which is the restriction to $X\times X$ of the Euclidean distance of $\C^n$; (2) the inner distance, $d_X$, where $d_X(x,y)$ is given by the infimum of the length of the paths on $X$ connecting $x$ and $y$. The set $X$ is {\bf Lipschitz normally embedded} ({\bf LNE}) if there is a constant $C\geq 1$ such that $d_X(x,y)\leq C\|x-y\|$ for all $x,y\in X$. Given a point $p\in X$, the set $X$ {\bf Lipschitz normally embedded} ({\bf LNE}) {\bf at $p$} if there is an open neighbourhood $U$ of $p$ such that $X\cap U$ is LNE. 

The LNE notion was introduced by L. Birbrair and T. Mostowski \cite{BirbrairM:2000}, where they just call it normally embedded. As it was already remarked in \cite{NeumannPP:2020} and \cite{MendesS:2024}, Lipschitz normal embedding is a very active research area with many recent results giving necessary conditions for a set to be LNE in the real and complex setting, e.g., by A. Belotto da Silva, L. Birbrair, M. Denkowski, L. R. Dias, A. Fernandes, D. Kerner, R. Mendes, F. Misev, W. D. Neumann, J. J. Nu\~no-Ballesteros, H. M. Perdersen, A. Pichon, N. Ribeiro, M. A. S. Ruas, J. E. Sampaio, E. C. da Silva, M. Tibar, K. Zajac etc \cite{BelottoFP:2022,BirbrairDMS:2025,BirbrairMN-B:2018,DenkowskiT:2019,DenkowskiZ:2024,DiasR:2022,FernandesS:2019,FernandesS:2020,FernandesS:2023,FernandesS:2024,KernerPR:2018,MisevP:2021,Nhan:2023,NeumannPP:2020,NeumannPP:2020b,Sampaio:2020,Sampaio:2023,Sampaio:2024,SampaioS:2025}.

It is natural to try characterizing the LNE property, but this is a hard problem in general. Consequently, it is common to restrict this problem to some special class of analytic sets. An example of a class of analytic sets where the LNE property is well understood is the class of analytic curves. An important class that, from the Lipschitz geometry point of view, generalizes the (irreducible) analytic curves is that formed by the analytic sets that are images of finite holomorphic mappings $f\colon (\C^n,0)\to (\C^{n+1},0)$, the so-called parametrized hypersurfaces. Here, we restrict our attention to characterizing LNE on a larger class of hypersurfaces, namely to the images of finite map-germs $f\colon (X^n,0)\to (\C^{n+1},0)$, where $X^n$ is a $n$-dimensional Cohen-Macaulay space. 

Let $f\colon (X^n,0)\to (\C^{n+1},0)$ be such a map-germ. If $X$ is irreducible, then so is the image of $f$, and we may define the \textit{degree} of $f$, denoted by \[{\deg f,}\] as the number of preimages of a generic point in the image of $f$. Regardless of the irreducibility of $X$, the \textit{multiplicity} of $f$ is the number
    \[\operatorname{mult} f=\dim_{\C}\mathcal{O}_X/\langle f\rangle,\]where $\langle f\rangle$ stands for the ideal generated by the coordinate functions of $f$. The  \textit{generic degree} of $f$, denoted by 
    \[\operatorname{gd}(f),\]
is the minimum of the dimensions of the $\C$-vector spaces $\mathcal{O}_X/\langle \pi\circ f\rangle$, where $\pi$ ranges through all linear projections onto $\C^n$. This numbers satisfy the inequalities
    \[\operatorname{deg} f\leq \operatorname{mult} f\leq\operatorname{gd}(f) \]

Observe that the following statements are equivalent:
\begin{itemize}
\item $\operatorname{mult} f=\operatorname{gd}(f)$
\item After a linear change of coordinates, the map-germ takes the form $f=(\overline f,h)$, with $h\in \langle \overline f\rangle$.
\end{itemize}

Our first result is

\begin{customthm}{\ref*{thm:caso geral}}
    Let $f\colon (X^n,0)\to (\C^{n+1},0)$ be a finite map germ such that $(X,0)$ is smooth and $\operatorname{mult}f=\operatorname{gd}(f)$. Then the following items are equivalent
    \begin{itemize}
        \item [(a)] $f(X,0)$ is LNE;
        \item[(b)] $f(X,0)$ is smooth;
        \item[(c)] $\operatorname{mult}f=\deg f.$
    \end{itemize}
\end{customthm}

The previous result has couple particular cases which are interesting on their own. The first one is as follows:

\begin{customthm}
     {\ref*{teo:caso corank 1 finita qualquer}}
     Let $f\colon (\C^n,0)\to (\C^{n+1},0)$ be a finite corank $\leq$ 1 holomorphic map germ. Then the following conditions are equivalent
     \begin{itemize}
         \item[(a)] $f(\C^n,0)$ is LNE;
         \item[(b)] $f(\C^n,0)$ is smooth;
         \item[(c)] $\text{mult }f=\deg f$.
     \end{itemize}
 \end{customthm}

The other interesting particular case is related  to the following problem:
\begin{problem}
Let $\varphi \colon (\C^n,0)\to (\C^{n+1},0)$ be an injective holomorphic mapping. What additional conditions on $\varphi$ guarantee for it to be an embedding?
\end{problem}
Our result gives a contribution of Lipschitz geometry to the above problem.
\begin{customthm}
     {\ref*{teo:caso injetivo}}
     Let $f\colon (\C^n,0)\to (\C^{n+1},0)$ be an injective holomorphic map germ. Then the following conditions are equivalent:
     \begin{itemize}
         \item[(a)] $f(\C^n,0)$ is LNE;
         \item[(b)] $f(\C^n,0)$ is smooth;
         \item[(c)] $f$ is a holomorphic embedding.
     \end{itemize}
 \end{customthm}
Note that the above theorem is related to the following very interesting problem.
\begin{conjecture}[L\^e's conjecture]
\label{conjectura de Le}
Let $f\colon (\C^n,0)\to (\C^{n+1},0)$ be an injective holomorphic map germ. Then $f$ is a corank 1 map germ. 
\end{conjecture}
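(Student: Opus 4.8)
The plan is to prove the conjecture by isolating and settling its most degenerate instance. First I record what injectivity buys us. Being injective, $f$ is generically one-to-one onto its image, so $\deg f=1$; since $f$ is moreover finite and birational onto the hypersurface $Y=f(\C^n,0)$ and the source is smooth, $(\C^n,0)$ is the normalization of $(Y,0)$. Granting in addition that injective germs satisfy $\operatorname{mult}f=\operatorname{gd}(f)$ (this is established in the course of proving Theorem~\ref{teo:caso injetivo}), a linear change of coordinates puts $f$ in the form $f=(\overline f,h)$ with $h\in\langle\overline f\rangle$, where $\overline f\colon(\C^n,0)\to(\C^n,0)$ is a finite branched cover of degree $\operatorname{gd}(f)$. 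The corank of $f$ then equals that of $\overline f$, and injectivity means precisely that $h$ takes pairwise distinct values on the generic fibre of $\overline f$.

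The key step is a reduction to the rank-zero case. If $\operatorname{rank}df_0=n-k$, choose coordinates so that $f=(x_1,\dots,x_{n-k},g_1,\dots,g_{k+1})$ with $g_1,\dots,g_{k+1}\in\mathfrak m^2$, and restrict $f$ to the linear slice $V=\{x_1=\dots=x_{n-k}=0\}\cong\C^k$. The restriction is still injective, its first $n-k$ coordinates vanish identically, and its differential at $0$ is zero; hence it defines an injective finite germ $g\colon(\C^k,0)\to(\C^{k+1},0)$ with $dg_0=0$. Consequently the conjecture is equivalent to its extreme case: \emph{no injective germ $g\colon(\C^k,0)\to(\C^{k+1},0)$ with $dg_0=0$ exists when $k\ge 2$}. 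The case $k=1$ must be excluded, and genuinely is exceptional: the cusp $t\mapsto(t^2,t^3)$ is an injective germ with vanishing differential. Any proof must therefore exploit the codimension-one constraint, i.e. that there are only $k+1$ target coordinates against $k$ source variables.

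To attack the rank-zero case I would argue by contradiction, seeking a genuine double point. Passing to leading forms, the tangent cone $C_0Y$ is the closure of the image of the leading-form map $\C^k\to\C^{k+1}$; when all $g_i$ have order $2$ this map is given by $k+1$ quadrics and is invariant under $x\mapsto-x$, hence at least two-to-one onto $C_0Y$, and in general the homogeneity of the leading forms forces a nontrivial cyclic symmetry. The aim is to upgrade this infinitesimal collapsing to an honest identification of two distinct points. Concretely, consider the off-diagonal double-point set $\{(x,x')\in\C^k\times\C^k: x\ne x',\ g(x)=g(x')\}$ (or its Marar--Mond refinement $D^2(g)$): it is cut out by the $k+1$ equations $g(x)=g(x')$ away from the $k$-dimensional diagonal, so its expected dimension is $k-1$. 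For $k\ge 2$ this is positive, predicting a positive-dimensional set of genuine double points and contradicting injectivity; the equivalent sheet-theoretic statement is that a single function $h$ cannot separate the sheets of a branched cover $\overline f$ of corank $\ge 2$.

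The main obstacle is precisely that this expected dimension need not be realized. The double-point scheme can collapse to the empty set through excess intersection -- this is exactly what happens when $k=1$, where the expected dimension is $0$ yet the cusp has empty double-point set and is injective -- and higher-order terms can annihilate the symmetry visible in the leading forms. Converting the dimension heuristic into a theorem therefore requires a reparametrization-robust lower bound on the double-point locus: the natural tools are the Cohen--Macaulayness and depth estimates for multiple-point schemes from Marar--Mond theory, an integral-closure bound for the ramification ideal of $\overline f$, or a monodromy obstruction to the existence of a global sheet-separating function. Making any of these uniform when the components $g_i$ have different orders is where the difficulty concentrates, and is the reason the statement remains open.
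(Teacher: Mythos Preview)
The statement is L\^e's conjecture, which the paper presents as an \emph{open} problem, not a theorem; the paper offers no proof and only remarks that a positive answer would make Theorem~\ref{teo:caso injetivo} a corollary of Theorem~\ref{teo:caso corank 1 finita qualquer}. There is therefore no ``paper's own proof'' to compare against, and your proposal is likewise not a proof. You reduce correctly to the rank-zero case---an injective finite germ $(\C^k,0)\to(\C^{k+1},0)$ with vanishing differential, to be shown impossible for $k\ge 2$---but the decisive step, showing the double-point locus is genuinely non-empty rather than merely of expected positive dimension, is left open, as you yourself acknowledge in your final paragraph. Your cusp example already illustrates that expected-dimension heuristics for $D^2(g)$ can fail, and nothing in the outline overcomes this; what you have written is an articulation of why the conjecture is plausible and where it is hard, not an argument that settles it.

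One side correction: you claim that $\operatorname{mult}f=\operatorname{gd}(f)$ for injective germs ``is established in the course of proving Theorem~\ref{teo:caso injetivo}''. It is not. The paper's proof of that theorem does not pass through the Mond--Pellikaan presentation or the hypothesis $\operatorname{mult}f=\operatorname{gd}(f)$ at all; it instead compares the dimension of the singular locus of the image (which is $n-1$ by Proposition~\ref{prop: dimensão conjunto singular}) against the dimension of the corank-$\ge 2$ locus (at most $n-2$ by Proposition~\ref{prop:dimensão pontos de corank grande}), after applying the corank-$1$ result pointwise to exclude corank-$1$ singular points in the LNE case.
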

In fact, if L\^e's conjecture has a positive answer, then the above theorem follows from Theorem \ref{teo:caso corank 1 finita qualquer}. In any case, our proof of Theorem \ref{teo:caso injetivo} does not depend on L\^e's conjecture.

\section{Preliminary definitions and results}
The results in this section can be found in \cite{FernandesS:2019} and \cite{birbrair2016regularity}.

\begin{definition}
    Let $X\subset \C^n$ an analytic set and $x\in X$, the \textit{tangent cone of $X$ in $x$ is the set}
    $$C(X,x)=\left\{v\in \C^n\colon \exists \{x_n\}_{n\in\N}\subset X,\{t_n\}_{n\in\N}\subset \C; x_n\to x \land t_nx_n\to v \right\}.$$
\end{definition}

\begin{theorem}
    If $X\subset \C^n$ is analytic and $x\in X$, let $\mathcal{I}(X,x)$ be the ideal of $X$ in $x$. Then 
    $$C(X,x)=V(\mathcal{I_*}(X,x)),$$
    where $\mathcal{I_*}(X,x)$ is the ideal generated by the initial parts of all functions in $\mathcal{I}(X,x)$.
\end{theorem}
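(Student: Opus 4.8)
The plan is to reduce to $x=0$ (translating if necessary), to note that both $C(X,0)$ and $V(\mathcal{I}_*(X,0))$ are cones, and then to realise the tangent cone as the special fibre of a one-parameter deformation. Writing each $g\in\mathcal{I}(X,0)$ as $g=g_d+g_{d+1}+\cdots$ with $g_j$ homogeneous of degree $j$ and $g_d=\operatorname{in}(g)\neq 0$ its initial part, I set $\hat{g}(x,t):=t^{-d}g(tx)=g_d(x)+t\,g_{d+1}(x)+\cdots$, which is holomorphic on $\C^n\times\C$ and satisfies $\hat g(x,0)=\operatorname{in}(g)(x)$. I then consider the \emph{deformation to the normal cone}
\[
\mathcal{X}:=\overline{\{(x,t)\in\C^n\times\C^*:\ tx\in X\}}\subseteq\C^n\times\C,
\]
whose fibre over $t\neq 0$ is $t^{-1}X$ and whose fibre $\mathcal{X}_0$ over $t=0$ I claim equals the tangent cone.

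First I would record the purely elementary facts. Unwinding the definition, $(v,0)\in\mathcal{X}_0$ if and only if there are $(y_k,t_k)$ with $t_k\to 0$, $t_k\neq 0$, $t_ky_k\in X$ and $y_k\to v$; setting $x_k:=t_ky_k\in X$ and $s_k:=t_k^{-1}$ this says $x_k\to 0$ and $s_kx_k\to v$, i.e. $v\in C(X,0)$. Hence $\mathcal{X}_0=C(X,0)$ \emph{by construction}, with no hard input. The easy inclusion $C(X,0)\subseteq V(\mathcal{I}_*)$ then drops out as well: each $\hat g$ vanishes on $\mathcal{X}\cap\{t\neq0\}$ (there $g(tx)=0$), hence on the closure $\mathcal{X}$, so restricting to $t=0$ gives $\operatorname{in}(g)|_{\mathcal{X}_0}=0$ for every $g$, i.e. $\mathcal{X}_0\subseteq V(\mathcal{I}_*)$.

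For the reverse inclusion I would argue through dimension and flatness. Since $\mathcal{X}$ is the closure of the $\{t\neq0\}$ locus, which surjects onto $\C^*$, every irreducible component of $\mathcal{X}$ dominates the $t$-line; in particular no component lies in $\{t=0\}$, and as $\mathcal{X}$ is pure of dimension $d+1$ with $d:=\dim_0 X$, cutting by the nonzerodivisor $t$ shows $\mathcal{X}_0=C(X,0)$ is pure of dimension $d$. On the other hand $V(\mathcal{I}_*)$ is the support of the tangent cone scheme $\operatorname{Spec}\operatorname{gr}_{\mathfrak m}\mathcal{O}_{X,0}$, and the associated graded ring has Krull dimension $\dim\mathcal{O}_{X,0}=d$, so $\dim V(\mathcal{I}_*)=d$ too. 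Thus I have $C(X,0)\subseteq V(\mathcal{I}_*)$ between two sets of the same pure dimension $d$, and it remains to see that $V(\mathcal{I}_*)$ has no component missed by $C(X,0)$.

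The main obstacle is exactly this last step, and it is where the flatness of the deformation to the normal cone enters. Concretely, I would show that $\mathcal{X}=V(\{\hat g:g\in\mathcal{I}(X,0)\})$ — that the ideal generated by the $\hat g$ has \emph{no vertical component} over $t=0$ — which amounts to $t$ being a nonzerodivisor on the corresponding quotient. This is the assertion that the Rees-type algebra $\bigoplus_{n}\mathfrak m^{n}t^{-n}$ is torsion-free, hence flat, over $\C[t^{-1}]$, being a submodule of the $\C[t^{-1}]$-torsion-free module $\mathcal{O}_{X,0}[t,t^{-1}]$; flatness forces the special fibre to be the full limit of the general fibres with no extraneous components, so $\mathcal{X}_0=V(\{\hat g\})\cap\{t=0\}=V(\mathcal{I}_*)$ as sets. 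Combined with $\mathcal{X}_0=C(X,0)$, the theorem follows. I expect the delicate point to be precisely the passage between a finite generating set of $\mathcal{I}(X,0)$ and the full initial ideal $\mathcal{I}_*$, which is in general \emph{not} generated by the initial parts of any generating set; the flatness statement above is exactly what circumvents this difficulty.
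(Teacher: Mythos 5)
First, a remark on the comparison you asked for: the paper does \emph{not} prove this statement. It is Whitney's classical theorem on tangent cones, quoted in Section 2 as a preliminary with a pointer to \cite{FernandesS:2019} and \cite{birbrair2016regularity}, so your argument can only be judged on its own merits. Your route --- deformation to the normal cone --- is a legitimate, standard alternative to the classical proof (which reduces to the hypersurface case by a finite linear projection and then uses a Weierstrass/Rouch\'e argument to produce points of $X$ along a prescribed limiting direction). Several pieces are done correctly: the identification $\mathcal{X}_0=C(X,0)$ (for the inclusion $C(X,0)\subseteq\mathcal{X}_0$ you should add the observation that $|t_k|\to\infty$ when $v\neq 0$, so the deformation parameter really tends to $0$); the easy inclusion $C(X,0)\subseteq V(\mathcal{I}_*)$ by continuity of the $\hat g$; and, importantly, the correct diagnosis that dimension counting proves nothing and that the whole theorem is concentrated in the statement that $V(\{\hat g\})$ has no irreducible component inside $\{t=0\}$.

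The genuine gap is exactly at that key step. You write that ``$t$ being a nonzerodivisor on the corresponding quotient'' \emph{is} ``the assertion that the Rees-type algebra $\bigoplus_n\mathfrak{m}^nt^{-n}$ is torsion-free.'' These are two different statements, and conflating them hides all the content. Torsion-freeness of the extended Rees algebra $\mathcal{R}_A=\bigoplus_{j\in\Z}\mathfrak{m}_A^{j}t^{-j}\subseteq A[t,t^{-1}]$, $A=\mathcal{O}_{X,0}$, is indeed trivial (a submodule of a torsion-free module; note it is flat over $\C[t]$, not over $\C[t^{-1}]$). But what you need is that $t$ is a nonzerodivisor on the \emph{analytic} ring $\C\{x,t\}/J$, where $J$ is the ideal generated by all the $\hat g$, $g\in\mathcal{I}(X,0)$; if this were literally the same assertion as the trivial one, the theorem would be contentless, which it is not. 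The missing bridge has two parts: (i) the identification of $J$ with the ideal generated in $\C\{x,t\}$ by the image of the kernel $\bigoplus_j\bigl(\mathcal{I}(X,0)\cap\mathfrak{m}^j\bigr)t^{-j}$ of $\mathcal{R}_{\C\{x\}}\twoheadrightarrow\mathcal{R}_A$ under the natural map $gt^{-j}\mapsto t^{-j}g(tx)$ (this is where using \emph{all} $g$, not a generating set, enters --- it is easy); and (ii) the flatness of $\mathcal{R}_{\C\{x\}}\to\C\{x,t\}$ at the relevant maximal ideal (for instance because both local rings have completion $\C[[x,t]]$), so that injectivity of multiplication by $t$ on $\mathcal{R}_A$ base-changes to injectivity of $t$ on $\C\{x,t\}/J=\C\{x,t\}\otimes_{\mathcal{R}_{\C\{x\}}}\mathcal{R}_A$. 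Neither step is hard, but together they are the actual content of the flatness argument and cannot be passed over with ``this is the assertion that.'' Two further points should be addressed in a complete write-up: the ``no vertical component'' conclusion, obtained at the germ at the origin, must be propagated to every point $(v,0)$, $v\in V(\mathcal{I}_*)$, e.g.\ via the $\C^*$-action $(x,t)\mapsto(\lambda x,\lambda^{-1}t)$, which leaves $J$ invariant since $\hat g(\lambda x,\lambda^{-1}t)=\lambda^{\operatorname{ord}(g)}\hat g(x,t)$; and your middle paragraph treats the closure $\mathcal{X}$ as an analytic set of pure dimension $d+1$, which is not automatic --- analyticity of that closure is essentially equivalent to the equality $\mathcal{X}=V(\{\hat g\})$ you are trying to prove --- though this paragraph is only motivational and is not used in your final argument.
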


\begin{theorem}
    \label{teo:regularidade LNE}
    If $X\subset \C^n$ is an analytic set that is LNE at $0$ and $C(X,0)$ is a linear subspace, then $X$ is smooth at $0$.
\end{theorem}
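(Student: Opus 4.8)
The plan is to reduce the problem to a statement about multiplicity and then extract a contradiction from the LNE hypothesis by a separation-of-sheets argument. Recall the classical fact that a complex analytic germ $(X,0)$ is smooth at $0$ if and only if its multiplicity $\operatorname{mult}_0 X$ equals $1$; thus it suffices to prove $\operatorname{mult}_0 X=1$. Write $d=\dim_0 X$. Since the tangent cone of a complex analytic germ has the same dimension as the germ, the hypothesis that $C(X,0)=L$ is a linear subspace forces $L$ to be a $d$-dimensional linear subspace, and after a linear change of coordinates I may assume $L=\C^d\times\{0\}\subset\C^d\times\C^{n-d}$.

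Next I would analyze the linear projection $\pi\colon\C^n\to L$ along $\{0\}\times\C^{n-d}$. Because $\ker\pi\cap C(X,0)=\{0\}$, the restriction $\pi|_X$ is a finite branched cover onto a neighbourhood of $0$ in $\C^d$, and its degree equals exactly $m:=\operatorname{mult}_0 X$. Away from the discriminant $\Delta\subset\C^d$ of $\pi|_X$, the germ $X$ splits into $m$ disjoint smooth sheets, each the graph $\{(z,\varphi_j(z))\}$ of a map over $\C^d$; since the whole tangent cone is $L$, every sheet is tangent to $L$ at $0$, that is, $\varphi_j(z)=o(\|z\|)$.

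The heart of the argument is to show that $m\geq 2$ contradicts the LNE property. I would choose a sequence $z_k\to 0$ along a generic direction, so that $\operatorname{dist}(z_k,\Delta)\gtrsim\|z_k\|$, and pick two distinct sheets, giving points $x_k=(z_k,\varphi_1(z_k))$ and $y_k=(z_k,\varphi_2(z_k))$ of $X$ over the same base point. On the one hand the outer distance is small, $\|x_k-y_k\|=\|\varphi_1(z_k)-\varphi_2(z_k)\|=o(\|z_k\|)=o(\|x_k\|)$, since both sheets are tangent to $L$. On the other hand, any path in $X$ joining $x_k$ to $y_k$ must switch sheets, so its projection must reach $\Delta$ or the origin; as $\operatorname{dist}(z_k,\Delta)\gtrsim\|z_k\|$, such a path has length $\gtrsim\|z_k\|\approx\|x_k\|$, whence $d_X(x_k,y_k)\gtrsim\|x_k\|$. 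Therefore $d_X(x_k,y_k)/\|x_k-y_k\|\to\infty$, contradicting LNE. This forces $m=1$, and the classical criterion yields that $X$ is smooth at $0$.

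The step I expect to be the main obstacle is the lower bound on the inner distance: making rigorous that two points lying over the same base point but on different sheets cannot be joined inside $X$ by a path substantially shorter than $\|x_k\|$. This requires controlling the monodromy of $\pi|_X$ around $\Delta$ and checking that a generic direction keeps $z_k$ uniformly far from the discriminant relative to $\|z_k\|$; once the sheets are genuinely separated over a definite neighbourhood of $z_k$, the projection argument supplies the needed length bound. Alternatively, the conclusion can be reached by appealing to the Lipschitz regularity theorem for complex analytic sets established in \cite{birbrair2016regularity}, together with the comparison results for LNE germs and their tangent cones from \cite{FernandesS:2019}.
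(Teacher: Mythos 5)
Your proposal cannot be matched against a proof in the paper, because the paper gives none: Theorem \ref{teo:regularidade LNE} is stated as a known result, imported via the preamble of Section 2 (``The results in this section can be found in \cite{FernandesS:2019} and \cite{birbrair2016regularity}''); it is essentially the Fernandes--Sampaio Lipschitz rigidity theorem. Your direct argument is therefore a genuinely different, self-contained route -- in fact it reconstructs the proof strategy behind those cited results -- and its substance is correct. The two points you flag as obstacles do go through by standard means: the generic direction exists because you may choose a unit vector $v\in L$ outside the cone $C(\Delta,0)$, which has dimension at most $d-1$, and then $\operatorname{dist}(tv,\Delta)\geq ct$ for small $t>0$; and the monodromy control is nothing more than triviality of coverings over simply connected sets -- the ball $B(z_k,c\|z_k\|)$ misses $\Delta\cup\{0\}$, so $X\cap\pi^{-1}\bigl(B(z_k,c\|z_k\|)\bigr)$ is a disjoint union of $m$ graphs, and any path in $X$ from $x_k$ to $y_k$ must have its ($1$-Lipschitz) projection exit the ball, giving $d_X(x_k,y_k)\geq c\|z_k\|$ while $\|x_k-y_k\|=o(\|z_k\|)$ (the tangency estimate $\operatorname{dist}(x,L)=o(\|x\|)$ is uniform on $X$ by the usual subsequence argument). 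Your closing ``alternative,'' namely citing \cite{birbrair2016regularity} and \cite{FernandesS:2019}, is exactly what the paper does; what your route buys is an elementary argument using only covering-space theory and basic properties of tangent cones, while the paper's route buys brevity by resting on established literature.

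One caveat deserves attention: your opening reduction ``smooth $\Leftrightarrow\operatorname{mult}_0X=1$'' and the identification of $\operatorname{mult}_0X$ with the degree of the branched cover $\pi|_X$ are valid for \emph{pure-dimensional} reduced germs, but not in general -- a plane union a transversal line has multiplicity one and is singular. Since the statement allows arbitrary analytic sets, your proof as written only shows that the top-dimensional part of $X$ is smooth; to finish one must rule out lower-dimensional components, e.g.\ by a second separation argument (such a component is tangent to the smooth top-dimensional part, so outer distances between the two are $o(\|x\|)$, yet any connecting path must pass through their intersection, which a generic point of the component stays away from at scale $\|x\|$). This is a genuine, though easily fillable, gap; it is immaterial for the paper itself, where the theorem is only applied to images of finite maps, which are pure-dimensional hypersurfaces.
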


In the whole text, we will always assume that if $f\colon (X^n,0)\to (\C^{p},0)$ is an analytic map germ, then $(X,0)$ is a Cohen-Macaulay analytic space of dimension $n$.

\begin{definition}
    We say that $f,g\colon (\C^n,0)\to (\C^p,0)$ are $\mathcal{A}$-equivalent if exists $\phi\colon (\C^n,0)\to (\C^n,0)$ and $\varphi\colon(\C^p,0)\to (\C^p,0)$ diffeomorphisms such that $g=\varphi\circ f\circ \phi^{-1}$.
\end{definition}
Since tangent cones and the LNE property are invariant under diffeomorphisms, our calculations must only be performed in representants of $\mathcal{A}$-equivalent classes.
\section{Calculating tangent cones by the Mond-Pellikaan algorithm}
Let $f\colon (X^n,0)\to (\C^{n+1},0)$ be a finite analytic map germ, writing $f(z)=(\overline{f}(x),h(x))$, with $\overline{f}(x)=(f_1(x),...,f_n(x))$ being the first $n$ coordinates and $h\in \mathcal{O}_X$. There exists a linear change of coordinates in the target turning $f$ into a mapping for which $\overline f$ is finite. Therefore, up to $\mathcal{A}$-equivalence, we may assume for $\overline{f}$ to be finite. We write $\mathcal{O}_X$ for the ring of function germs in the source of $\overline{f}$ and $\mathcal{O}_{n+1}$ for the ring of function germs in the target of $\overline{f}$. The ring $\mathcal{O}_X$ can be regarded as an $\mathcal{O}_{n+1}$-module with the following product rule: for $g\in \mathcal{O}_{n+1}$ and $a\in \mathcal{O}_X$
\[g(\mathcal{X})a(x):= g\circ f(x)a(x).\]
This push-forward $\mathcal{O}_T$-module will be denoted by $\overline{f}_*\mathcal{O}_X$. By Nakayama's lemma, taking representatives of the elements of a basis of the $\C$-vector space $\mathcal{O}_X/\langle f_1,...,f_n \rangle$, which has finite dimension since $\overline{f}$ is finite, one obtains a minimal set of generators of $\overline{f}_*\mathcal{O}_X$, say \[g_1,\dots, g_k.\]
Writing the functions $g_1h,g_2h,...,g_kh$ in terms of this generating set, one obtains expressions
$$g_jh(x)=\sum_{i=1}^ka_{ij}(\mathcal{X})g_i(x).$$

The matrix $\lambda=\left(\{a_{ij}(\mathcal{Z})\}_{k\times k}-\mathcal{Z}_{n+1}Id_{k\times k}\right)$ is a presentation matrix for $f_*\mathcal{O}_X$ (see \cite{mond2006fitting}). The function $\lambda$ is the generator of the $0^{th}$ Fitting Ideal of $f$, denoted by $\mathcal{F}_0(f)$. Therefore, $f(X,0)=V(\lambda)$ (see Corollary 11.2 in \cite{mondsingularitiesmappings}).

Notice now that the generators $g_1,\dots, g_k$ can always be chosen as monomials since $\mathcal{O}_X/\langle \overline{f}\rangle$ is a finite dimensional $\C$-vector space and the set of all classes of monomials generates this quotient, so there is a subset of the monomials that forms a basis for the vector space. Moreover, one of the terms of the basis is always in the class of $1$ since $\overline{f}(0)=0$, hence we may set $g_1=1$.

\begin{definition}
    Let $f\colon (X^n,0)\to (\C^{n+1},0)$ be a finite map germ whose image $f(X)$ is irreducible. The \textit{degree} of $f$, denoted by $\deg f$, is the number of generic preimages of $f$.
\end{definition}

\begin{prop}
\label{prop:determinante da a equação}
    Let $f\colon (X^n,0)\to (\C^{n+1},0)$ a finite map germ, then $\mathcal{F}_0(f)$ is generated by $h^k$, where $h\in \mathcal{O}_{n+1}$ is the reduced equation that generates the image of $f$ and $k$ is the degree of $f$ onto its image.
\end{prop}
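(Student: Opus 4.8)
The plan is to identify $\mathcal{F}_0(f)$ with a determinant and then pin down the multiplicity with which the reduced equation $h$ occurs in it by a degree count. Keeping the notation of Section 3, write $f=(\overline f,\eta)$ with $\overline f$ finite and $\eta$ the last coordinate function (the map denoted $h$ there, which I rename $\eta$ to free $h$ for the reduced equation of the image). Since $(X,0)$ is Cohen--Macaulay and $\overline f$ is finite, the push-forward $\overline f_*\mathcal{O}_X$ is a free $\mathcal{O}_n$-module; let $d$ be its rank, so that $d=\dim_{\C}\mathcal{O}_X/\langle\overline f\rangle$ equals the generic number of preimages of $\overline f$. With respect to a basis $g_1,\dots,g_d$, multiplication by $\eta$ is given by the matrix $A=(a_{ij})$, and the presentation matrix is $\lambda=A-Z_{n+1}\,\mathrm{Id}$. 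By the results recalled in Section 3, $\mathcal{F}_0(f)=(\det\lambda)$ and $\det\lambda=(-1)^d\chi_A(Z_{n+1})$, the characteristic polynomial of $A$ up to sign.

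I would first check that $\det\lambda$ is, up to the unit $(-1)^d$, a Weierstrass polynomial in $Z_{n+1}$ of degree $d$. It is monic of degree $d$ in $Z_{n+1}$ by construction, and its restriction to $\{\mathcal{X}=0\}$ is $\chi_{A(0)}$; since $A(0)$ is multiplication by $\eta$ on the finite local algebra $\mathcal{O}_X/\langle\overline f\rangle$ and $\eta$ lies in the maximal ideal, $A(0)$ is nilpotent, so $\chi_{A(0)}(Z_{n+1})=Z_{n+1}^{\,d}$. Hence $\det\lambda$ is $Z_{n+1}$-regular of order $d$. Next, since $f(X,0)=V(\det\lambda)$ and (as $\deg f$ is defined) the image is the irreducible hypersurface cut out by the reduced prime equation $h$, the radical of $(\det\lambda)$ equals $(h)$; as $\mathcal{O}_{n+1}$ is a UFD and $h$ is prime, we get $\det\lambda=u\,h^{m}$ for some unit $u$ and some integer $m\ge 1$.

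It remains to show $m=k=\deg f$, and this is where I would do the bookkeeping of degrees. The projection $\pi\colon(\C^{n+1},0)\to(\C^n,0)$ forgetting $Z_{n+1}$ restricts to a finite map on the image, because $\overline f=\pi\circ f$ is finite; let $e$ be its degree. Multiplicativity of degrees along $\overline f=\pi\circ f$ gives $d=\deg\overline f=\deg f\cdot e=k\,e$. On the other hand, finiteness of $\pi|_{f(X)}$ means $h$ is $Z_{n+1}$-regular, so after Weierstrass preparation $h$ has degree $e$ in $Z_{n+1}$ and $h^{m}$ has Weierstrass degree $m e$. Comparing with the order $d$ found above yields $d=m e$, whence $m=d/e=k$ and $\mathcal{F}_0(f)=(h^k)$.

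The main obstacle is the degree bookkeeping in the last step: one must justify that the rank $d$ of the free module really equals the generic degree of $\overline f$ (this uses the Cohen--Macaulay freeness together with a generic-fiber count), and that the Weierstrass degree of the reduced equation $h$ equals the degree $e$ of $\pi|_{f(X)}$ and not merely a divisor of it. Both reduce to counting points in generic fibers and checking that a generic slice $\{\mathcal{X}=\text{const}\}$ is transverse to the image, so that $h$ vanishes there to order exactly one; once these counts are in place, the identity $d=ke$ and the conclusion $m=k$ are immediate. The same count can be phrased by localizing $A$ at a generic point of $f(X)$: there $\overline f$ is an unramified $d$-sheeted cover whose $d$ points fall into $e$ fibers of $\pi|_{f(X)}$ of size $k$ each, so the eigenvalues of $A$ split into $e$ distinct values of multiplicity $k$, exhibiting $\chi_A$ as $h^k$ directly.
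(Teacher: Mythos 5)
Your argument is correct, but it is worth pointing out that it does not parallel a proof in the paper at all: the paper's entire proof of this proposition is a citation of Proposition 11.7 in Mond--Nu\~no-Ballesteros, \emph{Singularities of Mappings}. What you have done is reconstruct that result. Your route --- identify $\mathcal{F}_0(f)$ with $\det(A-\mathcal{Z}_{n+1}\mathrm{Id})$ via Mond--Pellikaan; note that $A(0)$ is nilpotent because $\eta$ lies in the maximal ideal of the Artinian local algebra $\mathcal{O}_X/\langle\overline f\rangle$, so $\pm\det\lambda$ is a Weierstrass polynomial of degree $d=\deg\overline f$; use the Nullstellensatz and factoriality of $\mathcal{O}_{n+1}$ to get $\det\lambda=u\,h^{m}$; and pin down $m=k$ from $d=ke$ together with the equality of the Weierstrass degree of $h$ with $\deg\pi|_{f(X)}$ --- is sound, and you correctly flag the two points that genuinely need justification: the rank/generic-fiber count (freeness follows from Auslander--Buchsbaum using that $X$ is Cohen--Macaulay and $\overline f$ finite), and the fact that the Weierstrass degree of the \emph{reduced} equation $h$ is exactly $e$, which in characteristic zero follows because $h$ reduced forces the $\mathcal{Z}_{n+1}$-discriminant to be nonzero, so generic slices meet $f(X)$ in simple points. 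Your closing remark --- localizing at a generic $y$, where the eigenvalues of $A(y)$ are the $\mathcal{Z}_{n+1}$-coordinates of the $e$ points of $\pi^{-1}(y)\cap f(X)$, each with multiplicity $k$ --- is actually the cleanest way to finish and is essentially how the cited source argues. The trade-off: the paper's citation is economical, while your proof makes the statement self-contained and isolates an intermediate fact the paper re-derives ad hoc inside the proof of Theorem 3.1, namely that the determinant of the presentation matrix is, up to sign, a Weierstrass polynomial in $\mathcal{Z}_{n+1}$ of degree $\deg\overline f$. One small caveat: both your proof and the statement implicitly assume the image is irreducible (otherwise ``the degree of $f$ onto its image'' is not defined and $h$ is not prime); in the reducible case the same argument, run component by component, gives $\det\lambda=\prod_i h_i^{k_i}$.
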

\begin{proof}
    See Proposition 11.7 in \cite{mondsingularitiesmappings}.
\end{proof}

Due to the previous proposition, if we want to see who is the tangent cone as a set, we can just take the initial part $\lambda_*$ of $\lambda=h^k$.

\begin{example}
   Let $f(x,y)=(x,y^2,xy)$.

    In this example, $\overline{f}(x,y)=(x,y^2)$ and the generators of $\overline{f}_*\mathcal{O_2}$ are $\{1,y\}$, and we obviously have
    $$xy=0\cdot 1+\mathcal{X}\cdot y$$
    $$y(xy)=\mathcal{XY}\cdot 1+0\cdot y$$
    Then the equation that defines $f(\C^2,0)$ is $\lambda =0$, where
    $$\lambda=\det \begin{bmatrix}
        -z&xy\\
        x&-z
    \end{bmatrix}=z^2-x^2y$$
    and then the tangent cone is $\{z^2=0\}$.
\end{example}

\begin{example}
\label{ex: y^2+função}
\text{Consider an analytic map germ of the form }  \[f(x,y)=(x,y^2+g(x,y),h(x,y)),\]  with $g\in \langle x,y^2 \rangle$.
    
    In this case, $\overline{f}(x,y)=(x,y^2+g(x,y))$, so $\{1,y\}$ are the generators of $\overline{f}_*\mathcal{O}_2$. Write $\mathcal{X}(x,y)=x$ and $\mathcal{Y}(x,y)=y^2+g(x,y)$ for the target coordinates. Now, let $r_1,r_2,p,q\in \mathcal{O}_2$ such that
    $$y^2= r_1(\mathcal{X},\mathcal{Y})\cdot 1+r_2(\mathcal{X},\mathcal{Y})\cdot y$$
    and
    $$h(x,y)=p(\mathcal{X},\mathcal{Y})\cdot 1+q(\mathcal{X},\mathcal{Y})\cdot y.$$
    Therefore, we have that
    \begin{align*}
        y\cdot h(x,y)&=p(\mathcal{X},\mathcal{Y})\cdot y+q(\mathcal{X},\mathcal{Y})\cdot y^2\\
        &=r_1(\mathcal{X},\mathcal{Y})q(\mathcal{X},\mathcal{Y})\cdot 1+ (p(\mathcal{X},\mathcal{Y})+r_2(\mathcal{X},\mathcal{Y})q(\mathcal{X},\mathcal{Y}))\cdot y.
    \end{align*}
    Then, the equation that defines $f(\C^2,0)$ is given by the vanishing of
    \begin{align*}
        \lambda&=\det\begin{bmatrix}
            p(x,y)-z&r_1(x,y)q(x,y)\\
            q(x,y)&p(x,y)+r_2(x,y)q(x,y)-z
        \end{bmatrix}\\
        &=(p(x,y)-z)(p(x,y)-z+r_2(x,y)q(x,y))-r_1(x,y)q^2(x,y)\\
        &=(p(x,y)-z)^2+r_2(x,y)q(x,y)(p(x,y)-z)-r_1(x,y)q^2(x,y).
    \end{align*}

    Applying $(0,0)$ in the equation of $y^2$, we get that $ord(r_1)\geq 1$, and since $y^2$ has order 2, then $ord(r_2)\geq 1$, because if $r_2(0,0)=C\neq 0$, then we would have a term $Cy$ in the series of $y\cdot r_2(\mathcal{X},\mathcal{Y})$, which cannot be cancelled with a term of $r_1(\mathcal{X},\mathcal{Y})$.

    Furthermore, applying $(0,0)$ to the equation of $h(x,y)$, we get $ord(p)\geq 1$, and since $f$ is singular, we have that $\frac{\partial h}{\partial y}(0,0)= 0$, but
    $$0=\frac{\partial h}{\partial y}(0,0)=q(0,0),$$
    then $ord(q)\geq 1$.

    Therefore, the second and third therms of the sum in $\lambda$ are in $\mathfrak{m}^3$, then the equation that defines the tangent cone of $f(\C^2,0)$ is 
    $$\lambda_*=\left\{ \begin{array}{lr}
       (p(x,y)_*-z)^2  & ; ord(p)=1 \\
        z^2 & ; ord(p)>1
    \end{array}\right.$$
    Therefore, $C(X,0)$ is a linear subspace.
\end{example}

Now, let us give the primary definitions to do the main result, in both of them, let $f\colon (X^n,0)\to (\C^{n+1},0)$ be a finite map germ.

\begin{definition}
    The \textit{multiplicity} of $f$ is the number
    $$\operatorname{mult} f=\dim_{\C}\mathcal{O}_X/\langle f\rangle.$$
\end{definition}
\begin{definition}
    The \textit{generic degree} of $f$, denoted by $\operatorname{gd}(f)$, is the minimum number
    $$\dim_{\C}\mathcal{O}_X/\langle \pi\circ f\rangle$$
    where $\pi$ is a linear projection of $\C^{n+1}$ in $\C^{n}$.
\end{definition}

It is clear that $\operatorname{mult} f\leq \operatorname{gd}(f)$, and since $\operatorname{gd}(f)=deg(\pi\circ f)$ for any such $\pi$ that minimizes the dimension, then $\deg(f)\leq \operatorname{gd(f)}$. After a linear change of coordinates in the codomain, we can suppose that $\pi$ is the projection onto the first $n$ coordinates, so, $\operatorname{gd}(f)=\dim \mathcal{O}_X/\langle \overline{f}\rangle=\deg \overline{f}$, that will be the size of the presentation matrix given by the algorithm.

Given $g_1,\dots,g_k$ is a minimal set of generators of $\overline{f}_*\mathcal{O}_X$ just like described in the algorithm. Then for each $i,j=1,\dots,k$, write
$$g_ig_j(x)=\sum_{s=1}^k \Gamma^{ij}_s(\mathcal{X})\cdot g_s(x).$$
The next Lemma will show that the $g_i$ can be chosen smartly so that the presentation matrix will have a good behaviour when $(X,0)$ is smooth.

\begin{lemma}
    \label{lemma:ordenação base}
    Let $\overline{f}\colon (X^n,0)\to (\C^{n},0)$ be a finite map germ with $(X,0)$ smooth. Then there is a ordered minimal set of generators of $\overline{f}_*\mathcal{O}_X$ such that for every $i,j\neq 1$, we have $\Gamma^{ij}_s\in \mathfrak{m}$ for every $s\leq \max\{i,j\}$.
\end{lemma}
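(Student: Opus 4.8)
The plan is to produce the ordered basis by adapting it to the adic filtration of the Artinian algebra cut out by $\overline f$, and then to recognize the constant term of each $\Gamma^{ij}_s$ as a structure constant of that algebra. Since $(X,0)$ is smooth, $\mathcal{O}_X$ is a regular local ring, and as $\overline f$ is finite the quotient $A:=\mathcal{O}_X/\langle\overline f\rangle$ is a finite-dimensional local $\C$-algebra; here $\langle\overline f\rangle=\mathfrak{m}\,\mathcal{O}_X$ is exactly the ideal generated by the pullbacks of the target coordinates, where $\mathfrak{m}$ denotes the maximal ideal of the target ring of $\overline f$. Write $\mathfrak{m}_A$ for the maximal ideal of $A$, namely the image of the maximal ideal of $\mathcal{O}_X$. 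By Nakayama's lemma, giving a minimal set of generators $g_1,\dots,g_k$ of $\overline f_*\mathcal{O}_X$ is the same as lifting a $\C$-basis of $A$, so it suffices to choose the basis of $A$ cleverly.

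First I would build a basis adapted to the $\mathfrak{m}_A$-adic filtration $A\supseteq\mathfrak{m}_A\supseteq\mathfrak{m}_A^2\supseteq\cdots$: for each $d\ge 0$ pick elements whose classes form a basis of $\mathfrak{m}_A^{d}/\mathfrak{m}_A^{d+1}$, and list them in order of increasing $d$. This yields generators $g_1,\dots,g_k$ together with orders $d_s:=\max\{d:g_s\in\mathfrak{m}_A^{d}\}$ satisfying $d_1\le d_2\le\cdots\le d_k$; since $A/\mathfrak{m}_A=\C$ the only order-zero element is $g_1=1$, and every other $g_s$ lies in $\mathfrak{m}_A$, i.e. $d_s\ge 1$ for $s\ge 2$. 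The defining property of such a basis is that $\{g_s:d_s\ge e\}$ spans $\mathfrak{m}_A^{e}$ for every $e$, which follows by the usual lifting-and-subtracting induction using that $A$ is Artinian.

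The bridge between the module relations and the algebra $A$ is that reducing $g_ig_j=\sum_s\Gamma^{ij}_s(\mathcal X)\,g_s$ modulo $\mathfrak{m}\,\mathcal{O}_X$ gives $g_ig_j\equiv\sum_s\Gamma^{ij}_s(0)\,g_s$ in $A$, since $\Gamma^{ij}_s(\mathcal X)-\Gamma^{ij}_s(0)$ is a series in the $f_l$ with no constant term and hence lies in $\langle\overline f\rangle$; thus $\Gamma^{ij}_s\in\mathfrak{m}$ is equivalent to the coefficient of $g_s$ in the $A$-expansion of $g_ig_j$ being zero. Now fix $i,j\neq 1$. Then $g_i,g_j\in\mathfrak{m}_A$, so $g_ig_j\in\mathfrak{m}_A^{d_i}\mathfrak{m}_A^{d_j}\subseteq\mathfrak{m}_A^{d_i+d_j}$, and by the adapted-basis property its $A$-expansion involves only those $g_s$ with $d_s\ge d_i+d_j$. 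Consequently $\Gamma^{ij}_s(0)=0$ whenever $d_s<d_i+d_j$. Finally, for any $s\le\max\{i,j\}$ monotonicity of the orders gives $d_s\le d_{\max\{i,j\}}=\max\{d_i,d_j\}$, and since $d_i,d_j\ge 1$ we have $\max\{d_i,d_j\}<d_i+d_j$; hence $d_s<d_i+d_j$ and $\Gamma^{ij}_s(0)=0$, i.e. $\Gamma^{ij}_s\in\mathfrak{m}$, as required.

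The only delicate points are the two translations between worlds: that the adapted basis really spans each $\mathfrak{m}_A^{e}$, so that membership of $g_ig_j$ in a high power of $\mathfrak{m}_A$ forces the low-index coefficients to vanish, and that the constant terms $\Gamma^{ij}_s(0)$ are precisely the structure constants of $A$. Smoothness enters only to guarantee that $\mathcal{O}_X$ is regular and that $\langle\overline f\rangle=\mathfrak{m}\,\mathcal{O}_X$, so that $A$ is a genuine local Artinian algebra whose $\mathfrak{m}_A$-adic filtration behaves multiplicatively; once this is in place the argument is purely the bookkeeping of orders, and I expect the index inequality $\max\{d_i,d_j\}<d_i+d_j$ to be the crux that makes the bound $s\le\max\{i,j\}$ come out.
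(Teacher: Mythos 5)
Your proof is correct, and it takes a genuinely different route from the paper's. The paper uses smoothness to regard $\mathcal{O}_X$ as a power series ring, chooses the $g_i$ to be \emph{monomials} ordered by polynomial degree, and argues by contradiction: if some $\Gamma^{ij}_s$ with $s\le\max\{i,j\}$ had nonzero constant term $C$, the monomial $Cg_s$ — which cannot occur in $g_ig_j$ (a monomial of strictly larger degree) nor be cancelled by the constant parts $\Gamma^{ij}_l(0)g_l$, distinct monomials being linearly independent — would have to be produced by terms lying in $\langle\overline f\rangle$, which is impossible after passing to the quotient $\mathcal{O}_X/\langle\overline f\rangle$. You instead work intrinsically in the Artinian local algebra $A=\mathcal{O}_X/\langle\overline f\rangle$: you take a basis adapted to the $\mathfrak{m}_A$-adic filtration, identify the constant terms $\Gamma^{ij}_s(0)$ with the structure constants of $A$ (your ``bridge'' step, which is valid since the expansion in a vector-space basis is unique), and conclude from $\overline{g_i}\,\overline{g_j}\in\mathfrak{m}_A^{d_i+d_j}$ together with $d_s\le\max\{d_i,d_j\}<d_i+d_j$. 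Both arguments share the same underlying idea — order the basis by size and exploit that a product of two positive-order elements has order larger than either factor — but your filtration formulation has a concrete advantage: it never actually uses smoothness of $(X,0)$. For any germ with $\overline f$ finite (in particular any Cohen--Macaulay germ, the paper's standing assumption), $A$ is automatically a finite-dimensional local $\C$-algebra whose $\mathfrak{m}_A$-adic filtration is multiplicative, so your proof shows the smoothness hypothesis of the lemma is superfluous; indeed your closing sentence attributes more to smoothness than is needed, since regularity of $\mathcal{O}_X$ plays no role anywhere in your argument. What the paper's construction buys in exchange is an explicit \emph{monomial} generating set, which is what one actually manipulates when running the Mond--Pellikaan algorithm in the examples; but the proof of Theorem \ref{thm:caso geral} invokes only the statement of the lemma (plus $g_1=1$, which your construction also provides), so your argument could be substituted without any change downstream.
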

\begin{proof}
    As previously mentioned, we may initially take $g_1,\dots,g_k$ to be a monomial set of generators for $\overline{f}\mathcal{O}_X$. After possibly changing the order of the $g_i$'s, we may assume that if $\deg g_i<\deg g_j$, then $i<j$, that is, the ordering is by polynomial degree. We will show that this ordered set of generators is the right one.
    
    Suppose that, for some $i,j\neq 1$, we have $\Gamma^{ij}_s\notin \mathfrak{m}$ for some $s\leq i$, so $\Gamma^{ij}_s(\mathcal{X})g_s(x)=Cg_s(x)+A(x)$, where $A(0)=0$. Since $\partial g_s<\partial g_ig_j$ because $g_s$ has degree at most the degree of $g_i$, we have that $Cg_s(x)$ must be appear in the expression 
    \begin{align*}
        &\sum_{l\neq s}\Gamma^{ij}_l(\mathcal{X})g_l(x)\\
        =&\sum_{l\neq s}\Gamma^{ij}_l(0)g_l(x)+\sum_{l\neq s}(\Gamma^{ij}_l(\mathcal{X})-\Gamma^{ij}_l(0))g_l(x).
    \end{align*}
    The fist sum cannot contain a $Cg_s(x)$ because it is a polynomial with monomials $g_l$ that are different from $g_s$. So $Cg_s(x)$ must appear in the second sum, that is,
    $$\sum_{l\neq s}(\Gamma^{ij}_l(\mathcal{X})-\Gamma^{ij}_l(0))g_l(x)=-Cg_s(x)+B(x).$$
    Passing to the quotient $\mathcal{O}_X/\langle\overline{f}\rangle$, we get a contradiction since the left side is in the class of $0$ but the right side is not.
\end{proof}

\textbf{Remark.} The equality $\operatorname{mult} f=\operatorname{gd}(f)$ is equivalent to the statement that, after a linear change of coordinates, $f=(\overline{f},h)$ with $h\in \langle\overline{f}\rangle$. This is because
$$(\langle\overline{f}\rangle+\langle h\rangle)/\langle\overline{f}\rangle\rightarrow \mathcal{O}_X/\langle\overline{f}\rangle\rightarrow \mathcal{O}_X/(\langle\overline{f}\rangle+\langle h\rangle$$
is exact, so $\operatorname{gd}(f)=\operatorname{mult} f+\dim_{\C}\langle f\rangle/\langle\overline{f}\rangle$.

Now, we are ready to prove the main result

\begin{theorem}
    \label{thm:caso geral}
    Let $f\colon (X^n,0)\to (\C^{n+1},0)$ be a finite map germ such that $(X,0)$ is smooth and $\operatorname{mult}f=\operatorname{gd}(f)$. Then the following items are equivalent
    \begin{itemize}
        \item [(a)] $f(X,0)$ is LNE;
        \item[(b)] $f(X,0)$ is smooth;
        \item[(c)] $\operatorname{mult}f=\deg f.$
    \end{itemize}
\end{theorem}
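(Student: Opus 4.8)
The plan is to reduce all three implications to an explicit description of the tangent cone $C(f(X),0)$ read off from the Mond--Pellikaan presentation, and then to feed this into Theorem \ref{teo:regularidade LNE}. Since the tangent cone and the LNE property are $\mathcal A$-invariant, I may assume $\overline f$ is finite, so that (using that $X$ is smooth) $\overline f_*\mathcal O_X$ is free of rank $k=\operatorname{gd}(f)=\deg\overline f$ over the ring $\mathcal O_n$ of the target of $\overline f$, with a degree-ordered monomial basis $g_1=1,g_2,\dots,g_k$ as in Lemma \ref{lemma:ordenação base}. Multiplication by $h$ is encoded by a matrix $A(\mathcal Z')=(a_{ij}(\mathcal Z'))$ with $\mathcal Z'=(\mathcal Z_1,\dots,\mathcal Z_n)$, and the image is $f(X)=V(\lambda)$ with
\[
\lambda=\det\bigl(A(\mathcal Z')-\mathcal Z_{n+1}\,\mathrm{Id}\bigr)=u\,H^{\deg f},
\]
where $u$ is a unit and $H$ the reduced equation of the image (Proposition \ref{prop:determinante da a equação}). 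The implication (b)$\Rightarrow$(a) is immediate, since a smooth germ is a graph and hence LNE; the content of the theorem is (a)$\Rightarrow$(b) together with (b)$\Leftrightarrow$(c).

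The heart of the proof is to show that $C(f(X),0)$ is a linear subspace. First, the hypothesis $\operatorname{mult}f=\operatorname{gd}(f)$ gives $h\in\langle\overline f\rangle$ (see the Remark), so multiplication by $h$ is the zero map on $B:=\mathcal O_X/\langle\overline f\rangle$, whence $A(0)=0$ and every $a_{ij}$ vanishes at the origin. Substituting $(\mathcal Z',\mathcal Z_{n+1})\mapsto(t\mathcal Z',t\mathcal Z_{n+1})$, each entry of $A(t\mathcal Z')-t\mathcal Z_{n+1}\,\mathrm{Id}$ equals $t$ times its linear part plus $O(t^2)$, so the initial form of $\lambda$ is
\[
\lambda_*=\det\bigl(A_1(\mathcal Z')-\mathcal Z_{n+1}\,\mathrm{Id}\bigr),
\]
a homogeneous polynomial of degree $k$, where $A_1$ collects the linear parts of the $a_{ij}$. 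It then remains to recognise this characteristic-polynomial-type expression.

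Writing $h=\sum_j H_j(\mathcal Z')g_j$ in the basis (so $H_j(0)=0$) and using $g_jg_i=\sum_s\Gamma^{ji}_s g_s$, a short computation shows that the linear part of $a_{si}$ equals $\sum_j\Gamma^{ji}_s(0)\,(H_j)_1$; equivalently, $A_1(\mathcal Z')$ is exactly the matrix of multiplication on $B$ by the element
\[
\eta=\sum_j (H_j)_1(\mathcal Z')\,g_j=\ell(\mathcal Z')\cdot 1+\eta',
\]
where $\ell=(H_1)_1$ is a linear form and $\eta'=\sum_{j\ge 2}(H_j)_1g_j$ lies in the maximal ideal $\mathfrak m_B$. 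Since $B$ is Artinian local, $\mathfrak m_B$ is nilpotent; concretely, Lemma \ref{lemma:ordenação base} forces the matrix of multiplication by $\eta'$ to be strictly upper triangular in the degree-ordered basis (its $(s,i)$ entry involves only $\Gamma^{ji}_s(0)$ with $j\ge 2$, which vanish for $s\le\max\{i,j\}$). Hence $A_1(\mathcal Z')=\ell(\mathcal Z')\,\mathrm{Id}+(\text{nilpotent})$ has the single eigenvalue $\ell(\mathcal Z')$, so
\[
\lambda_*=(-1)^k\bigl(\mathcal Z_{n+1}-\ell(\mathcal Z')\bigr)^k .
\]
Thus $C(f(X),0)=V(\mathcal Z_{n+1}-\ell(\mathcal Z'))$ is a hyperplane, in particular a linear subspace, and Theorem \ref{teo:regularidade LNE} yields (a)$\Rightarrow$(b).

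Finally, (b)$\Leftrightarrow$(c) follows by comparing orders: from $A(0)=0$ the monomial $(-\mathcal Z_{n+1})^k$ forces $\operatorname{ord}\lambda=k=\operatorname{gd}(f)$, while $\lambda=u\,H^{\deg f}$ gives $\operatorname{ord}\lambda=\deg f\cdot\operatorname{ord}H$, so $\operatorname{ord}H=\operatorname{gd}(f)/\deg f=\operatorname{mult}f/\deg f$; since the reduced hypersurface $V(H)$ is smooth at $0$ precisely when $\operatorname{ord}H=1$, smoothness of $f(X)$ is equivalent to $\operatorname{mult}f=\deg f$. I expect the main obstacle to be the tangent cone computation of the third paragraph, namely proving that $\lambda_*$ is a power of a single linear form; the crucial point — and the reason both the hypothesis $\operatorname{mult}f=\operatorname{gd}(f)$ and the ordering of Lemma \ref{lemma:ordenação base} are indispensable — is exactly the reduction of $A_1$ to a scalar matrix plus a nilpotent multiplication operator on the Artinian algebra $B$.
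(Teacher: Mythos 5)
Your proposal is correct and follows essentially the same route as the paper: reduce to the Mond--Pellikaan presentation with the degree-ordered generators of Lemma \ref{lemma:ordenação base}, use $h\in\langle\overline f\rangle$ to show the initial form of the defining equation is the $k$-th power of a linear form, and then conclude via Theorem \ref{teo:regularidade LNE} and Proposition \ref{prop:determinante da a equação}. The only differences are cosmetic: the paper first kills the coefficient of $g_1$ by a coordinate change (so its linear form is just $-\mathcal Z$) and extracts the initial part by counting orders permutation by permutation, whereas you keep $\ell=(H_1)_1$ and package the same computation as ``scalar plus nilpotent (strictly triangular) multiplication operator on $B$'' --- note only that your triangular matrix is strictly \emph{lower} triangular in the stated row-column convention, and that the column $i=1$ needs the separate (trivial) observation $\eta'\cdot 1=\eta'$ rather than Lemma \ref{lemma:ordenação base} itself.
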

\begin{proof}
    Write $f=(\overline{f},h)$ where $\dim_{\C}\mathcal{O}_X/\langle\overline{f}\rangle=\operatorname{gd}(f)$ and let $1=g_1,g_2,\dots,g_k$ be a minimal set of generators for $\overline{f}_*\mathcal{O}_X$ as in Lemma \ref{lemma:ordenação base}. Since $\operatorname{mult}f=\operatorname{gd}(f)$, then $k=\operatorname{mult}f$. Now write 
    $$h=\sum_{i=1}^k p_i(\mathcal{X})g_i(x).$$
    Calling $\mathcal{Z}=h$ the $n+1^{th}$ coordinate in the codomain and doing the diffeomorphic coordinate change $(\mathcal{X},\mathcal{Z})\mapsto (\mathcal{X},\mathcal{Z}-p_1(\mathcal{X}))$, we can suppose that $p_1\equiv 0$.
    Now multiply $h$ by each $g_j$, we get
    \begin{align*}
        g_jh(x)&=\sum_{i=2}^k p_i(\mathcal{X})g_i(x)g_j(x)\\
        &=\sum_{i=2}^k p_i(\mathcal{X})\left(\sum_{s=1}^k \Gamma^{ij}_s(\mathcal{X})\cdot g_s(x)\right)\\
        &=\sum_{s=1}^k \underbrace{\left(\sum_{i=2}^kp_i(\mathcal{X})\Gamma^{ij}_s(\mathcal{X})\right)}_{P_s^j(\mathcal{X})}g_s(x)
    \end{align*}
    Since $h\in \langle\overline{f}\rangle$, it is $0$ in the quotient. Then each $p_i$ is in $\mathfrak{m}$. So, every entry of the presentation matrix is in $\mathfrak{m}$, our aim is now to prove that the entries above the diagonal are in fact in $\mathfrak{m}^2$ and that the only terms of degree 1 that appears in the diagonal are the new variable $-\mathcal{Z}$. If $s\leq j$, then
    \begin{align*}
        P_s^j(\mathcal{X})=\sum_{i=2}^k\underbrace{p_i(\mathcal{X})}_{\in \mathfrak{m}}\underbrace{\Gamma^{ij}_s(\mathcal{X})}_{\in \mathfrak{m}}
    \end{align*}
    So, $P_s\in \mathfrak{m}^2$ for all $s\leq j$, this says that the presentation matrix will be
    \[
\left[
\begin{array}{ccccc}
-\mathcal{Z}+P_1^1(\mathcal{X}) &        &        &        &  \\
       &  &        &   \text{\LARGE{A}}     &   \\
       &        & \ddots &        &   \\
   \text{\LARGE{B}}    &        &        & -\mathcal{Z}+P_k^k(\mathcal{X}) &   \\
      &        &        &        & 
\end{array}
\right]
\]
This way, we can found the tangent cone of $f(X,0)$ by taking the determinant of the presentation matrix and then taking the initial part of it. Notice however that since the determinant is a sum of products calculated by means of permutations, let us do each permutation at a time.

First, the trivial permutation will gives, in the determinant, the product of all the diagonal terms, that is,
$$\prod_{i=1}^k \left(-\mathcal{Z}+P_i^i(\mathcal{X})\right),$$
since $P_i^i\in \mathfrak{m}^2$, the initial part of this product is just $(-\mathcal{Z})^k$. Now pick any non-trivial permutation $\sigma\in S_k$, there are two numbers $u,v$ such that $\sigma(u)< u$ and $\sigma(v)>v$. In other words, any non-trivial permutation in the determinant will contain a multiplication of a term contained in the upper triangle $A$ in the presentation matrix. So, the order of all the other summed terms in the determinant is at least $k+1$ since we are multiplying $k$ factors in $\mathfrak{m}$ and at least one of them is in $\mathfrak{m}^2$. Therefore, this terms will not appear in the initial part, showing that the tangent cone $C(f(X),0))$ is a hyperplane. This together with Theorem \ref{teo:regularidade LNE} and the fact that every smooth germ is LNE shows the equivalence $(a)\Leftrightarrow (b)$.

Now, to show the equivalence $(b)\Leftrightarrow(c)$, notice that we got that the initial part of the determinant of the presentation matrix is $(-\mathcal{Z})^k$, where $k=\operatorname{mult}f$. On the other hand, by Proposition \ref{prop:determinante da a equação}, the determinant is $F^{\deg f}$, where $F$ is a reduced equation such that $f(X,0)=V(F)$. We know that $f(X,0)$ is smooth if, and only if $F$ have order $1$, and this occur if, and only if $\deg f=k$ because the order of $F^{\deg f}=k$. Therefore, $f(X,0)$ is smooth if, and only if $\operatorname{mult}f=\deg f$.
\end{proof}

The aim now is to see if the corank of the map gives information about the LNE property.

\begin{lemma}
\label{lemma:corank 1}
    If $f\colon (\C^n,0)\to (\C^{n+1},0)$ is a finite corank 1 map germ, then $f$ is $\mathcal{A}$-equivalent to a map of the form $(x_1,\dots, x_{n-1},y^m+g(x,y),h(x,y))$ that satisfies the hypotheses of Theorem \ref{thm:caso geral}.
\end{lemma}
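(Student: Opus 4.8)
The plan is to first put $f$ into the split corank $1$ normal form, and then to exploit the freedom in the target coordinates so that the last component lands in the ideal generated by the first $n$ components; by the Remark preceding Theorem \ref{thm:caso geral}, this is precisely the condition $\operatorname{mult} f=\operatorname{gd}(f)$.

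First I would invoke the standard reduction for corank $\le 1$ germs. Since $df_0$ has rank $n-1$, after linear changes of coordinates in source and target the first $n-1$ components of $f$ become coordinate functions, so that $f$ is $\mathcal{A}$-equivalent to a germ $(x_1,\dots,x_{n-1},\alpha(x,y),\beta(x,y))$ with $\frac{\partial \alpha}{\partial y}(0)=\frac{\partial\beta}{\partial y}(0)=0$, where $x=(x_1,\dots,x_{n-1})$ and $y$ is the remaining source variable. I would cite the usual corank $1$ theory for this step.

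Next I would use finiteness. Because $f$ is finite, $f(0,y)\neq 0$ for $0\neq y$ small, so at least one of $\alpha(0,y),\beta(0,y)$ is not identically zero. A linear change mixing only the last two target coordinates preserves both the vanishing of the $y$-derivatives at $0$ and the coordinate-function form of the first $n-1$ components, so I may arrange $\alpha(0,y)\not\equiv 0$ and $m:=\operatorname{ord}_y\alpha(0,y)\le \operatorname{ord}_y\beta(0,y)$, with $m\ge 2$ since $\partial_y\alpha(0)=0$. Then $\overline f=(x_1,\dots,x_{n-1},\alpha)$ is finite, because $\overline f^{-1}(0)\subset\{x=0\}$ and $\alpha(0,y)=y^m\cdot(\text{unit})$ vanishes only at $y=0$. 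Scaling $\alpha$ and reparametrising $y$ in the source (by Weierstrass preparation and the implicit function theorem, both of which preserve $y$-orders) I would bring $\alpha$ to the form $y^m+g(x,y)$ while keeping $h:=\beta$ with $\operatorname{ord}_y h(0,y)\ge m$.

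Finally I would verify the hypotheses of Theorem \ref{thm:caso geral}. Smoothness of $(X,0)=(\C^n,0)$ is automatic, so only $\operatorname{mult} f=\operatorname{gd}(f)$ remains, and by the Remark this amounts to $h\in\langle\overline f\rangle$. Since $y^m+g$ is $y$-regular of order $m$, Weierstrass division gives $h=Q\,(y^m+g)+R$ with $R=\sum_{j<m}r_j(x)y^j$; evaluating at $x=0$ and using that $h(0,y)$ has $y$-order $\ge m$ while $y^m+g(0,y)$ generates $\langle y^m\rangle$ forces $R(0,y)=0$, i.e.\ $R\in\langle x_1,\dots,x_{n-1}\rangle$. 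Hence $h\in\langle\overline f\rangle$ and $\operatorname{mult}f=\operatorname{gd}(f)$, as required; this recovers Example \ref{ex: y^2+función} when $m=2$. I expect the only genuinely substantial step to be the initial corank $1$ reduction; the ordering trick for $\alpha$ and $\beta$ and the final Weierstrass division are elementary once that normal form is in hand.
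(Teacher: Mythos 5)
Your proof is correct and follows essentially the same route as the paper's: reduce to the corank-$1$ normal form $(x,\alpha(x,y),\beta(x,y))$ via the rank theorem, use finiteness to arrange $\alpha(0,y)\not\equiv 0$ with $m=\operatorname{ord}_y\alpha(0,y)\le\operatorname{ord}_y\beta(0,y)$, and then prove $h\in\langle\overline f\rangle$ by an order-counting argument at $x=0$, concluding with the Remark preceding Theorem \ref{thm:caso geral}. The only (harmless) difference is technical: where the paper expands $h$ in the module generators $1,y,\dots,y^{m-1}$ of $\overline f_*\mathcal{O}_X$ and splits into the cases $\operatorname{ord}_y h<\infty$ and $\operatorname{ord}_y h=\infty$, your single Weierstrass division by $y^m+g$ treats both cases uniformly.
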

\begin{proof}
    We will show that the last coordinate of $f$ is in the ideal generated by the others, that is equivalent to say $\operatorname{mult}f=\operatorname{gd}(f)$ as remarked before. Write the coordinates of $\C^n$ as $(x,y)\in \C^{n-1}\times \C$. By the Rank Theorem and since $f$ has corank 1, $f$ is $\mathcal{A}$-equivalent to a map of the form $(x,p(x,y),h(x,y))$. Since $f$ is finite, either $\text{ord}_yp<\infty$ or $\text{ord}_yh<\infty$, this is because if both orders are $\infty$, then $F(0,y)=(0,p(0,y),h(0,y))=(0,0,0)$ and then $f$ would not be finite. Suppose without loss of generality that $\text{ord}_yp\leq \text{ord}_yh$. There are two cases
    
    \textbf{Case 1.} ($\text{ord}_yh=t<\infty$). By the Weierstrass Preparation Theorem, we can write $g$ and $h$ as
    $$p(x,y)=r_1(x,y)\left(y^m+a_1(x)y^{m-1}+\dots a_{m-1}(x)y+a_m(x)\right)$$
    and 
    $$h(x,y)=r_2(x,y)\left(y^t+b_1(x)y^{t-1}+\dots b_{t-1}(x)y+b_t(x)\right),$$
    where $r_1,r_2$ are units and $a_i(0,0)=b_j(0,0)=0$ for each $0\leq i\leq m-1$, $0\leq j\leq t-1$ (We will still call this map $f$).
    
    It is clear that $\langle x,p(x,y)\rangle=\langle x,y^m\rangle$, so $\{1,y,\dots,y^{m-1}\}$ forms a basis for $\mathcal{O}_X/\langle \overline{f}\rangle$ as a $\C$-vector space. Furthermore, writing $h$ in terms of the generators
    $$r_2(x,y)\left(y^t+b_1(x)y^{t-1}+\dots b_{t-1}(x)y+b_y(x)\right)=\sum_{i=0}^{m-1}P_i(\mathcal{X,Y})y^i,$$
    we get that $P_i\in \mathfrak{m}$ for each $0\leq i\leq m-1$ because $P_i(0,0)\neq 0$ implies a free term in $y$ with degree less then $m$, which is a contradiction with the fact that $m\leq t$. And this finishes the proof for this case.

    \textbf{Case 2.} ($\text{ord}_yh=\infty$). In this case, we can also write 
    $$p(x,y)=r(x,y)\left(y^m+a_1(x)y^{m-1}+\dots a_{m-1}(x)y+a_m(x)\right)$$
    where $r$ is a unit and $a_i(0)=0$ for each $0\leq i\leq m-1$. Writing $h$ in the terms of the generators
    $$h(x,y)=\sum_{i=0}^{m-1}P_i(\mathcal{X,Y})y^i,$$
    we claim that $P_i\in \mathfrak{m}$. In fact, 
    \begin{align*}
        0\equiv h(0,y)&=\sum_{i=0}^{m-1}P_i(0,p(0,y))y^i\\
        &=\sum_{i=0}^{m-1}P_i(0,r(0,y)y^m)y^i
    \end{align*}
    Notice that $P_i(0,p(0,y))y^i$ have a term of degree $i$ if $P_i\notin \mathfrak{m}$ and all other terms will be of degree at least $m+i$. So, if some $P_i\notin \mathfrak{m}$, then a term of degree $i$ would appear in the expression of the function $0$, which is a contradiction. And this finishes the proof for this case as well.
\end{proof}

\textbf{Remark.} In the above lemma, the multiplicity of $f$ is exactly $m$, the exponent of $y$ in $p(x,y)$.

Now we can characterize the LNE property in every image of finite corank 1 map germs. Putting together Theorem \ref{thm:caso geral} and Lemma \ref{lemma:corank 1}, we get what follows:

\begin{theorem}
     \label{teo:caso corank 1 finita qualquer}
     If $f\colon (\C^n,0)\to (\C^{n+1},0)$ is a finite corank 1 map germ, then the following conditions are equivalent
     \begin{itemize}
         \item[(a)] $f(\C^n,0)$ is LNE;
         \item[(b)] $f(\C^n,0)$ is smooth.;
         \item[(c)] $\text{mult }f=\deg f$.
     \end{itemize}
 \end{theorem}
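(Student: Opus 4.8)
The plan is to deduce this theorem directly from the two results preceding it, treating it as the corank $1$ specialization of Theorem \ref{thm:caso geral}. The essential point is that for a map germ $f\colon(\C^n,0)\to(\C^{n+1},0)$ the source $X=\C^n$ is already smooth, so the only hypothesis of Theorem \ref{thm:caso geral} that is not automatic is the equality $\operatorname{mult}f=\operatorname{gd}(f)$, and this is exactly what Lemma \ref{lemma:corank 1} supplies.

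First I would invoke Lemma \ref{lemma:corank 1} to replace $f$, up to $\mathcal{A}$-equivalence, by a normal form $(x_1,\dots,x_{n-1},y^m+g(x,y),h(x,y))$ for which $\operatorname{mult}f=\operatorname{gd}(f)$ holds (this is what ``satisfies the hypotheses of Theorem \ref{thm:caso geral}'' means). Since the source of this representative is still $\C^n$, both standing hypotheses of Theorem \ref{thm:caso geral}---smoothness of $(X,0)$ and $\operatorname{mult}f=\operatorname{gd}(f)$---are in force, so the theorem yields the equivalence of (a), (b) and (c) for the normal form.

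It then remains to transport this equivalence back to the original $f$, which requires checking that each of the three conditions is an $\mathcal{A}$-invariant. Properties (a) and (b) concern the germ $f(\C^n,0)$ only through the geometry of its image, and since an $\mathcal{A}$-equivalence includes a target diffeomorphism, both the LNE property and smoothness of the image are preserved, as already recorded in the discussion following the definition of $\mathcal{A}$-equivalence. For (c) I would note that $\deg f$, being the number of generic preimages, is unchanged by a reparametrization of the source and a diffeomorphism of the target, while $\operatorname{mult}f=\dim_{\C}\mathcal{O}_X/\langle f\rangle$ is invariant because a source diffeomorphism induces a ring automorphism of $\mathcal{O}_X$ and a target diffeomorphism fixing $0$ does not alter the ideal $\langle f\rangle$ generated by the coordinate functions. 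Hence all three statements hold for $f$ exactly when they hold for the normal form, completing the argument.

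The main---and really only---obstacle is the bookkeeping of $\mathcal{A}$-invariance in the previous paragraph: one must verify that the ideal $\langle f\rangle$ is genuinely preserved under post-composition with a target diffeomorphism (so that $\operatorname{mult}f$ descends to $\mathcal{A}$-classes) and that irreducibility of the image guarantees $\deg f$ is meaningful. Both are standard, so I expect no serious difficulty; the entire proof is a short assembly of Lemma \ref{lemma:corank 1} and Theorem \ref{thm:caso geral}.
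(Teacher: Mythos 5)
Your proposal is correct and is essentially the paper's own proof: the paper obtains Theorem \ref{teo:caso corank 1 finita qualquer} precisely by combining Lemma \ref{lemma:corank 1} with Theorem \ref{thm:caso geral}, relying on the $\mathcal{A}$-invariance of the relevant notions. The only difference is that you spell out the invariance of $\operatorname{mult}f$, $\deg f$, smoothness and LNE under $\mathcal{A}$-equivalence, which the paper leaves implicit in its remark following the definition of $\mathcal{A}$-equivalence.
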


\begin{example}
    Let $f\colon (\C^2,0)\to (\C^3,0)$ given by $f(x,y)=(x,y^k,h(x,y^k))$ for $k>1$, the image of $f$ is generated by $z-h(x,y)=0$, which is smooth, $f$ is not an embedding and $\operatorname{mult} f=k=\deg f$.
\end{example}

It is also easy to find an example of a corank 2 map with $\operatorname{mult} f>\deg f$ but with LNE image.

\begin{example}
    Let $f\colon (\C^2,0)\to (\C^{3},0)$ be the map given by $f(x,y)=(x^2,y^2,xy)$, the algorithm gives the equation $(z^2-xy)^2$. Therefore, the reduced equation for his tangent cone is $z^2-xy=0$, which is not a linear subspace. On the other hand, $\operatorname{mult}f=3>2=\deg f$ and $f(\C^n,0)$ is LNE. This example shows that Theorem \ref{teo:caso corank 1 finita qualquer} does not pass through corank 2 map germs in general, showing that the hypothesis of $\operatorname{mult}f=\operatorname{gd}(f)$ is not superfluous.
\end{example}

\section{Injective case}
Assuming that Lê's conjecture \ref{conjectura de Le} is true, Theorem \ref{teo:caso corank 1 finita qualquer} already includes the red{ two dimensional injective case, since then injectivity forces $\operatorname{corank} f=1$}, but we will show that, in fact, the equivalence holds for all injective map germs even if the conjecture is false.

For corank 1 mappings, the singular locus of $f(\C^n,0)$ is easier to find; in particular, if $f$ is injective, we can drop the corank 1 hypothesis and get a more general statement. Let us first state some necessary results.

\begin{prop}
\label{prop: conjunto singular da imagem}
    Let $f\colon (\C^n,0)\to (\C^{n+1},0)$ be finite and generically 1-to-1. Then the singular locus of $f(\C^n,0)$ is the set of points with at least two preimages, counting multiplicity. In other words, the singular locus of $f(\C^n,0)$ is the set formed by images of non-injective points and images of critical points.
\end{prop}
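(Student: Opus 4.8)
The plan is to realize $f$ as the normalization of its image and to exploit the classical fact that the normalization map is a biholomorphism exactly over the normal locus of the target, which contains its smooth locus. Put $Y=f(\C^n,0)$. Since $\C^n$ is smooth, hence normal, and $f$ is finite and generically $1$-to-$1$ (that is, of degree $1$ onto $Y$), the universal property of normalization together with the fact that a finite birational morphism onto a normal space is an isomorphism identifies $f$, up to the canonical identification, with the normalization map $\nu\colon\widetilde Y\to Y$, where $\widetilde Y=\C^n$.

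First I would prove that every smooth point of $Y$ has a unique, regular preimage. Indeed, the normalization restricts to an isomorphism over the normal locus of $Y$, and since smooth points are normal, $f$ is a biholomorphism over the smooth locus of $Y$. Thus if $q\in Y$ is smooth, then $f^{-1}(q)=\{p\}$ is a single point and $Df_p$ is an isomorphism onto $T_qY$, so $p$ has rank $n$, i.e. $p$ is a regular (non-critical) point and is the only preimage of $q$. In particular $q$ is neither the image of a critical point nor the image of a non-injective point.

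Conversely, suppose $q=f(p)$, where $p$ is the unique preimage of $q$ and $p$ is a regular point, i.e. $Df_p$ has rank $n$. Then $f$ is an immersion at $p$, hence injective on a neighbourhood $U$ of $p$ and mapping $U$ biholomorphically onto a smooth $n$-dimensional submanifold $f(U)\subset\C^{n+1}$. Using the finiteness (properness) of $f$ and the equality $f^{-1}(q)=\{p\}$, I would shrink to a neighbourhood $V$ of $q$ with $f^{-1}(V)\subset U$; then $Y\cap V=f\!\left(f^{-1}(V)\right)=f(U)\cap V$ is smooth, so $q$ is a smooth point of $Y$.

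Putting the two directions together, $q$ is a smooth point of $Y$ if and only if it has a single regular preimage; equivalently, $q$ is singular precisely when it is the image of a non-injective point (at least two distinct preimages) or the image of a critical point (a ramified, hence multiplicity $\geq 2$, single preimage), which is exactly the set of points with at least two preimages counted with multiplicity. The main obstacle I anticipate is the careful justification, in the complex-analytic germ category, that $f$ is genuinely the normalization of $Y$ and that the normalization is an isomorphism precisely over the smooth locus of the hypersurface; once these standard facts are in place, the immersion-plus-finiteness argument in the converse direction is routine, provided the neighbourhoods $U$ and $V$ are chosen compatibly.
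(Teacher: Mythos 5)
Your proof is correct, but it takes a genuinely different route from the paper. The paper does not argue the proposition directly at all: its ``proof'' is a citation to Corollary 11.2 and Proposition 11.10 of Mond--Nu\~no-Ballesteros, i.e.\ it rests on the Fitting-ideal machinery developed there (the image is $V(\mathcal{F}_0(f))$, and the locus of points with at least two preimages counted with multiplicity is $V(\mathcal{F}_1(f))$, which those results identify with the singular locus). You instead give a self-contained geometric argument: since $\C^n$ is normal and $f$ is finite of degree one onto $Y=f(\C^n,0)$, the induced finite bimeromorphic map from $\C^n$ to the normalization $\widetilde Y$ is an isomorphism, so $f$ \emph{is} the normalization of $Y$; then smooth points of $Y$ are normal, hence have a unique immersive preimage, and conversely a unique immersive preimage plus finiteness lets you shrink to a neighbourhood $V$ of $q$ with $Y\cap V=f(U)\cap V$ smooth (here the closedness of $Y\cap V$ in $V$ is what makes $f(U)\cap V$ a closed submanifold --- you use this implicitly, and it is worth saying). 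Both directions are sound. What the paper's citation buys is the scheme-theoretic bookkeeping: the ideal $\mathcal{F}_1(f)$ is exactly what makes ``preimages counted with multiplicity'' a well-defined analytic notion, whereas in your write-up the equivalence between ``critical point'' and ``single preimage of multiplicity $\geq 2$'' (that is, $\dim_\C\mathcal{O}_{\C^n,p}/f^*\mathfrak{m}_q\geq 2$ if and only if $df_p$ is not injective) is asserted rather than proved; this is harmless here, since the proposition's own statement declares the two formulations synonymous, so proving the second one suffices. Your route is more elementary and transparent; the paper's route is shorter and hands you the analytic structure of the singular locus for free, which is in the spirit of the Fitting-ideal techniques used elsewhere in the paper.
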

\begin{proof}
This follows from Corollary 11.2 and Proposition 11.10 in \cite{mondsingularitiesmappings}.
\end{proof}

\begin{prop}[Corollary 11.5 in \cite{mondsingularitiesmappings}]
\label{prop: dimensão conjunto singular}
    Let $f\colon (\C^n,0)\to (\C^{n+1},0)$ be finite and generically 1-to-1. Then the singular locus of $f(\C^n,0)$ has dimension $n-1$.
\end{prop}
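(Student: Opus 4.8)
The plan is to get the bound $\dim\Sigma\le n-1$ for free from the hypersurface structure of the image, and to extract the matching lower bound $\dim\Sigma\ge n-1$ from a normality argument that exploits the smoothness of the source. First I would record that, $f$ being finite and generically $1$-to-$1$, the image $X:=f(\C^n,0)$ is a reduced hypersurface $V(F)\subset(\C^{n+1},0)$ (this is exactly Proposition \ref{prop:determinante da a equação} with $\deg f=1$). For any reduced hypersurface the singular locus $\Sigma:=\mathrm{Sing}(X)$ is a proper analytic subgerm, so $\dim\Sigma\le n-1$; here I tacitly work in the interesting case where $X$ is singular at $0$, since otherwise $\Sigma$ is empty and the statement is vacuous.

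For the lower bound I would argue that $f$ is the normalization of $X$: it is finite, generically $1$-to-$1$ (hence birational onto $X$), and its source $(\C^n,0)$ is smooth, in particular normal, so by the universal property of normalization $f$ must be the normalization map. Being a hypersurface, $X$ is Cohen--Macaulay, so $\mathcal{O}_{X,0}$ satisfies Serre's condition $S_2$; Serre's criterion then reduces normality of $X$ at $0$ to the condition $R_1$. Now $X$ cannot be normal at $0$: a normal germ whose normalization is the smooth germ $(\C^n,0)$ would be isomorphic to it and hence smooth, contradicting that $0$ is a singular point. Therefore $R_1$ fails, meaning some height-one prime of $\mathcal{O}_{X,0}$ has non-regular localization, i.e. $\Sigma$ contains a codimension-one subgerm of $X$. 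This yields $\dim\Sigma\ge n-1$, and combined with the upper bound gives $\dim\Sigma=n-1$.

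The hard part will be making the normalization/Serre step fully rigorous in the analytic-germ category: one must justify that finiteness plus birationality plus a normal source really identify $f$ with the analytic normalization, and that Serre's criterion together with the $S_2$ property of hypersurfaces apply to the local ring $\mathcal{O}_{X,0}$. An alternative route, more in the spirit of the preceding results, would be to invoke Proposition \ref{prop: conjunto singular da imagem} to identify $\Sigma$ with the image of the double-point locus and then bound the dimension of the latter via the conductor ideal, which for a Gorenstein image with Cohen--Macaulay normalization cuts out a divisor on both source and target. I would nonetheless prefer the Serre-criterion route, as the $S_2$ property of a hypersurface is immediate and the whole argument stays short.
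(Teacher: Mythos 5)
Your argument is correct, but it cannot be ``the same as the paper's'' for a simple reason: the paper does not prove this proposition at all, it is stated as a citation of Corollary 11.5 of \cite{mondsingularitiesmappings}. What you have written is essentially the standard proof of that cited corollary. The upper bound is unproblematic: the image is an $n$-dimensional germ in $(\C^{n+1},0)$, hence a hypersurface $V(F)$, and Proposition \ref{prop:determinante da a equação} with $\deg f=1$ gives reducedness, so $\dim\mathrm{Sing}(X)\leq n-1$. For the lower bound, your chain (finite $+$ generically one-to-one $+$ smooth source $\Rightarrow$ $f$ is the normalization of $X$; hypersurface $\Rightarrow$ Cohen--Macaulay $\Rightarrow$ $S_2$; $X$ singular $\Rightarrow$ $X$ not normal, since a normal germ with smooth normalization is smooth; Serre $\Rightarrow$ $R_1$ fails) is exactly the right mechanism, and the last step --- passing from a height-one prime with non-regular localization to a codimension-one subgerm of $\mathrm{Sing}(X)$ --- needs no excellence machinery here: for a reduced hypersurface in characteristic $0$ the Jacobian criterion identifies the non-regular primes with those containing the Jacobian ideal, so the failure of $R_1$ directly produces an $(n-1)$-dimensional component of $\mathrm{Sing}(X)$. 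One small correction of wording: when $X$ is smooth at $0$ the proposition is not ``vacuous'' but false as literally stated, since then $\mathrm{Sing}(X)=\emptyset$; the cited corollary carries the implicit hypothesis that $f$ is not an embedding, and this is also how the paper uses it --- in the proof of Theorem \ref{teo:caso injetivo} the map $f$ is assumed singular, and non-immersivity together with generic injectivity forces $X$ to be singular at $0$ by the very normalization argument you give.
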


\begin{prop}
\label{prop:dimensão pontos de corank grande}
    Let $f\colon (\C^n,0)\to (\C^{n+1},0)$ be a finite map and define
    $$\hat{\Sigma}^k(f)=\{x\in \C^n\colon \text{corank }df_x\geq k\}.$$
    Then $\dim \hat{\Sigma}^k(f)\leq n-k$.
\end{prop}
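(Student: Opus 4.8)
The plan is to control each irreducible component of $\hat{\Sigma}^k(f)$ separately and to extract all the geometric content from the finiteness of $f$. First I would record that $\hat{\Sigma}^k(f)$ is a closed analytic germ: since $\operatorname{corank} df_x\geq k$ is the same as $\operatorname{rank} df_x\leq n-k$, the set is cut out by the vanishing of the $(n-k+1)\times(n-k+1)$ minors of the Jacobian matrix of $f$. In particular it has finitely many irreducible components, each of a well-defined dimension, so it is enough to prove that no component has dimension $\geq n-k+1$.

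So let $C$ be an irreducible component of $\hat{\Sigma}^k(f)$ and suppose, for a contradiction, that $m:=\dim C\geq n-k+1$. The key observation lives entirely in the source $\C^n$. Fix a smooth point $x_0$ of $C$, so that $T_{x_0}C$ is an $m$-dimensional linear subspace of $\C^n$; since $x_0\in \hat{\Sigma}^k(f)$ we also have $\dim \ker df_{x_0}\geq k$. Both subspaces sit inside $\C^n$, so the Grassmann inequality gives
\[
\dim\bigl(\ker df_{x_0}\cap T_{x_0}C\bigr)\ \geq\ \dim\ker df_{x_0}+\dim T_{x_0}C-n\ \geq\ k+(n-k+1)-n\ =\ 1.
\]
Consequently the restriction $f|_C$ is nowhere an immersion: at every smooth point its differential $df_{x_0}|_{T_{x_0}C}$ has a nontrivial kernel, so $\operatorname{rank} d(f|_C)_{x_0}\leq m-1$.

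Now I would invoke finiteness, which is where the argument really turns. Because the generic rank of the differential of a holomorphic map from an irreducible germ equals the dimension of its image, the previous bound forces $\dim f(C)\leq m-1<\dim C$. On the other hand $f$ is finite, so $f^{-1}(0)=\{0\}$ and hence $(f|_C)^{-1}(0)\subseteq C\cap f^{-1}(0)=\{0\}$; thus $f|_C$ is again a finite map germ and in particular preserves dimension, giving $\dim f(C)=\dim C$. This contradiction shows $\dim C\leq n-k$, and ranging over the components yields $\dim\hat{\Sigma}^k(f)\leq n-k$.

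The step I expect to carry the weight is precisely this appeal to finiteness; everything else is a linear-algebra count in the source, and notice that the target dimension $n+1$ never enters. That the hypothesis is essential is clear from $f(x,y)=(x,0,0)$, for which $\hat{\Sigma}^1(f)=\C^2$ while $n-k=1$. The two standard facts I would need to state carefully are that the restriction of a finite map germ to a closed analytic subgerm is again finite and that a finite map preserves dimension; both are routine, but they are exactly the places where $f^{-1}(0)=\{0\}$ is used.
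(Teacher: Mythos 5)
Your proof is correct, but it cannot be compared line-by-line with the paper's, because the paper gives no internal argument at all: its proof of this proposition is the single line ``See for example Lemma 2.8 in \cite{ballesteros2017multiplepoints}''. Your write-up therefore supplies a self-contained argument where the paper defers to the literature, and the argument is sound. Its two pillars both hold: (i) the linear-algebra step, that on any irreducible component $C$ of $\hat{\Sigma}^k(f)$ with $\dim C\geq n-k+1$ the Grassmann inequality forces $\ker df_{x_0}\cap T_{x_0}C\neq 0$ at every smooth point $x_0$ of $C$, so $\operatorname{rank}\, d(f|_C)_{x_0}\leq \dim C-1$; and (ii) the finiteness step, that $f|_C$ is again a finite map germ and hence preserves dimension. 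The one phrase I would tighten is the appeal to ``the generic rank of the differential equals the dimension of the image'': for an arbitrary holomorphic map this presupposes that the image has a well-defined dimension, which in your setting is justified because $f|_C$ is finite and Remmert's finite mapping theorem makes $f(C)$ analytic --- but it deserves to be said. Alternatively, you can bypass the image entirely: at a smooth point of $C$ where $\operatorname{rank}\, d(f|_C)$ is maximal, the constant rank theorem produces a local fiber of $f|_C$ of dimension $\dim C-\operatorname{rank}\, d(f|_C)\geq 1$, directly contradicting the fact that a finite representative of $f$ has finite fibers. Either formulation closes the argument, and your example $f(x,y)=(x,0,0)$ correctly isolates where finiteness enters.
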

\begin{proof}
    See for example Lemma 2.8 in \cite{ballesteros2017multiplepoints}.
\end{proof}
With these three results, we can prove the injective case.

\begin{theorem}\label{teo:caso injetivo}
     Let $f\colon (\C^n,0)\to (\C^{n+1},0)$ be an injective holomorphic map germ. Then the following conditions are equivalent
     \begin{itemize}
         \item[(a)] $f(\C^n,0)$ is LNE;
         \item[(b)] $f(\C^n,0)$ is smooth;
         \item[(c)] $f$ is a holomorphic embedding.
     \end{itemize}
 \end{theorem}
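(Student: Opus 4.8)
The plan is to establish $(b)\Rightarrow(a)$, $(a)\Rightarrow(b)$ and $(b)\Leftrightarrow(c)$, with the implication $(a)\Rightarrow(b)$ carrying essentially all the weight. I first record that an injective germ is automatically finite, since $f^{-1}(0)=\{0\}$, and generically one-to-one of degree $1$; hence Propositions \ref{prop: conjunto singular da imagem}, \ref{prop: dimensão conjunto singular} and \ref{prop:dimensão pontos de corank grande} all apply to $f$. The implication $(b)\Rightarrow(a)$ is then immediate, as a smooth germ is LNE.

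For $(a)\Rightarrow(b)$ I argue by contradiction: assume $Y:=f(\C^n,0)$ is LNE at $0$ but singular at $0$, and set $\Sigma=\operatorname{Sing}(Y)$, so that $0\in\Sigma$. When $n=1$ one has $\operatorname{corank}df_0\le1$ automatically, so Theorem \ref{teo:caso corank 1 finita qualquer} applies directly at $0$ and contradicts the singularity of $Y$; so assume $n\ge2$. Since $f$ is injective it has no identification (double) points, so by Proposition \ref{prop: conjunto singular da imagem} the set $\Sigma=f(\hat\Sigma^1(f))$ is exactly the image of the critical locus, and by Proposition \ref{prop: dimensão conjunto singular} it has dimension $n-1$. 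The decisive step is to locate nearby singular points of corank exactly $1$: by Proposition \ref{prop:dimensão pontos de corank grande} one has $\dim f(\hat\Sigma^2(f))\le\dim\hat\Sigma^2(f)\le n-2<n-1=\dim\Sigma$, so a generic $q\in\Sigma$, which may be taken arbitrarily close to $0$, has its unique preimage $p=f^{-1}(q)$ lying in $\hat\Sigma^1(f)\setminus\hat\Sigma^2(f)$, i.e. $\operatorname{corank}df_p=1$.

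At such a point the germ $f\colon(\C^n,p)\to(\C^{n+1},q)$ is a finite corank $1$ map germ, so Theorem \ref{teo:caso corank 1 finita qualquer} applies at $q$; as $q\in\Sigma$ makes $(Y,q)$ non-smooth, it yields that $(Y,q)$ is not LNE. This must be reconciled with the fact that the LNE property localizes: if $Y\cap U$ is LNE with constant $C$, then $Y$ is LNE at every point $q\in U$. The mechanism is that, for a small ball $B\ni q$, pairs of points of $Y\cap B$ that are mutually close are joined by minimizing inner paths of $Y\cap U$ whose length, being comparable to their outer distance, keeps them trapped inside $B$, while the mutually distant pairs are controlled by the bounded inner diameter and the local connectedness of $Y$ at $q$. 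Hence $Y$ is LNE at $q$, a contradiction, and $Y$ is smooth at $0$. Finally, for $(b)\Leftrightarrow(c)$ the implication $(c)\Rightarrow(b)$ is trivial; for $(b)\Rightarrow(c)$ I compose $f$ with a chart $\psi\colon(Y,0)\to(\C^n,0)$ of the smooth image, so that $\psi\circ f\colon(\C^n,0)\to(\C^n,0)$ is a finite, injective (hence degree-one) map germ onto the normal germ $(\C^n,0)$, therefore a biholomorphism, whence $f=\psi^{-1}\circ(\psi\circ f)$ is an embedding onto $(Y,0)$.

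I expect the genuine obstacle to be the localization of the LNE property, namely the careful justification that LNE of $Y\cap U$ forces LNE at every nearby base point $q$; the argument must separate the regime of mutually close pairs, for which minimizing paths remain inside a small ball (with attention to points near its boundary, handled by nesting the test balls appropriately), from that of mutually distant pairs, which are controlled by connectedness and the finiteness of the local inner diameter. Once this is in place, everything else reduces transparently to the corank $1$ theorem together with the three dimension estimates.
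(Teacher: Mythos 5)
Your overall strategy coincides with the paper's: both proofs reduce everything to the dimension count on the singular locus (Propositions~\ref{prop: conjunto singular da imagem}, \ref{prop: dimensão conjunto singular} and \ref{prop:dimensão pontos de corank grande}, applicable because an injective germ is finite and generically one-to-one) and then apply the corank~$1$ result, Theorem~\ref{teo:caso corank 1 finita qualquer}, at a singular point $q$ of the image lying arbitrarily close to $0$ whose unique preimage has corank exactly~$1$. Your direct proof of (b)$\Rightarrow$(c) via the degree-one finite map onto the smooth (hence normal) germ is a small addition; the paper gets (b)$\Rightarrow$(c) for free since its contradiction argument proves (a)$\Rightarrow$(c) rather than (a)$\Rightarrow$(b).

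The one place where your write-up has a genuine gap is precisely the step you flag as ``the genuine obstacle'', and the point is that this obstacle does not exist: it is dissolved by the definition, not by an analytic argument. In this paper, ``$Y$ is LNE at $q$'' means that there exists \emph{some} open neighbourhood $W$ of $q$ in the ambient space with $Y\cap W$ LNE; the neighbourhood is not required to be a small ball centered at $q$. So if $Y\cap U$ is LNE and $q\in U\cap\operatorname{Sing}(Y)$ is your chosen point, then $U$ itself is an open neighbourhood of $q$ witnessing that $Y$ is LNE at $q$, and Theorem~\ref{teo:caso corank 1 finita qualquer}, applied to the germ of $f$ at $p=f^{-1}(q)$ (whose image germ is $(Y,q)$, by injectivity and finiteness of $f$), then forces $(Y,q)$ to be smooth --- contradicting $q\in\operatorname{Sing}(Y)$. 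No path-trapping or ball-nesting is needed. Moreover, your sketch of the localization, taken on its own terms, would not suffice as a proof: trapping near-minimizing paths of $Y\cap U$ inside a ball $B$ only shows that pairs of points in a smaller ball $B'$ have $d_{Y\cap B}$-distance at most $C$ times their outer distance; this is a two-neighbourhood statement, not the LNE property of any single set in its own inner metric, and pairs near $\partial B$ (where ``bounded inner diameter plus local connectedness'' gives no uniform Lipschitz constant) remain uncontrolled. If you insisted on a genuine ``global LNE implies LNE at every point'' lemma with small balls, you would be invoking a nontrivial local-versus-global result rather than something your sketch establishes; fortunately, with the paper's definition you need none of it.
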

\begin{proof}
    If $f$ is an embedding, then $f(\C^n,0)$ is smooth and then obviously LNE. Suppose now that $f$ is singular and $X$ is LNE, we know by Propositions \ref{prop: conjunto singular da imagem} and \ref{prop: dimensão conjunto singular} that $\text{sing}X$ is formed by the images of non-immersive points of $f$ and some strict multiple points, and that it has dimension $n-1$. But, since $f$ is injective, then $\text{sing}X=\{f(x)\colon \text{corank }df_x>0\}$. By Proposition \ref{teo:caso corank 1 finita qualquer}, the image of a singular injective corank 1 map cannot be LNE, so $\text{sing}X=\{f(x)\colon \text{corank }df_x\geq 2\}$. On the other hand, Proposition \ref{prop:dimensão pontos de corank grande} guarantees that this set is the image of a set with dimension at most $n-2$, which contradicts the fact that $\dim\text{sing}X=n-1$. 
\end{proof}

\end{document}